\begin{document}

\title{Wind field reconstruction with adaptive random Fourier features}

\author{Jonas Kiessling} 
\address{H-Ai AB, Stockholm, Sweden and KTH Royal Institute of Technology, Stockholm, Sweden}

\author{Emanuel Ström} 
\address{KTH Royal Institute of Technology, Stockholm, Sweden}

\author{Raúl Tempone}
\address{RWTH Aachen University, Aachen, Germany and KAUST, Saudi Arabia}

\date{January 2021}

\maketitle

\vspace{5cm}
\begin{abstract}
We investigate the use of spatial interpolation methods for reconstructing the horizontal near-surface wind field given a sparse set of measurements. In particular, random Fourier features is compared to a set of benchmark methods including Kriging and Inverse distance weighting. Random Fourier features is a linear model $\beta(\pmb x) = \sum_{k=1}^K \beta_k e^{i\omega_k \pmb x}$ approximating the velocity field, with frequencies $\omega_k$ randomly sampled and amplitudes $\beta_k$ trained to minimize a loss function. Inspired by \cite{tempone99}, we include a physically motivated divergence penalty term $|\nabla \cdot \beta(\pmb x)|^2$, as well as a penalty on the Sobolev norm. We derive a bound on the generalization error and derive a sampling density that minimizes the bound. Following  \cite{kammonen2020}, we devise an adaptive Metropolis-Hastings algorithm for sampling the frequencies of the optimal distribution. In our experiments, our random Fourier features model outperforms the benchmark models.\\[0.5cm]
\textbf{Keywords: }
 Random Fourier features, neural networks, Metropolis algorithm, discrete Fourier, physical loss function, spatial interpolation, machine learning, wind field reconstruction, flow field estimation.
\end{abstract}
\clearpage\tableofcontents
\clearpage
\section{Introduction}
An integral part in wind farm planning and weather forecasting is access to high quality wind data. The available data is often sparse and interpolation techniques are employed in order to increase spatial resolution. Depending on the application, interpolation can be done with respect to time aggregates or for each measurement. In wind farm site planning for example, a point of interest is the expected energy output over time, which can be estimated as a function of the wind speed. A common approach is to approximate the distribution of the wind speed over time with some parametric model, and then apply spatial interpolation to the parameters. The work~\cite{carta2009} lists approximately 200 papers written between 1940 and 2008 which focus on parametric models for time series of wind speed measurements. Due to high spatial variability, interpolating wind over shorter time intervals is considered a harder problem. However, there are reasons why this can be useful, such as predicting the propagation of forest fires and pollutants, modelling the movement of flying animals and insects~\cite{Luo2008}, or setting up initial conditions for weather simulations.\\

Interpolating both wind speed and direction from a sparse set of wind velocity measurements is commonly referred to as wind field reconstruction. In modern approaches to field reconstruction such as \cite{Callaham2019, jin2020, Erichson2020}, high resolution numerical simulations are used to train machine learning models to astonishing accuracy. However, this type of high resolution training data is sometimes not accessible, or might take a considerable time and effort to simulate. In such cases, simpler interpolation models can arguably be a valid alternative. The definition of an interpolation model varies depending on the context. The definition used in our work draws from \cite{hengl2018}, and is synonymous with regression (see Section~\ref{sec: Spatial interpolation models}).\\

A number of studies have been made comparing different interpolation models in various applications such as  temperatures~\cite{appelhans2015}, snow depth~\cite{erxleben2002}, mineral concentrations~\cite{hengl2018} and wind speed~\cite{cellura2008, carta2009, jung2015}. Traditional spatial interpolation models include nearest neighbours, inverse distance weighting,  Kriging, linear regression, polynomial spline methods and various combinations of them~\cite{JinBook2008}. In recent years, machine learning methods such as random forests and neural networks have proven worthy adversaries of traditional interpolation models, see for example~\cite{appelhans2015} and \cite{hengl2018}. These types of models trade interpretability for power and flexibility and allow for the inclusion of features such as terrain elevation, slope, concavity, roughness~\cite{appelhans2015}, without much additional work.\\

In this paper, we compare methods for near surface wind field reconstruction. A machine learning method known as random Fourier features is compared to a selection of popular and successful interpolation techniques. The model fits a Fourier series $\beta(x) = \sum_{k} \beta_k e^{i\omega_k x}$ to the data. Instead of traditional greedy optimisation methods such as stochastic gradient descent, our random Fourier features model explores the frequency domain using an adaptive Metropolis algorithm inspired by \cite{kammonen2020}. In each step, the Fourier coefficients $\beta_k$ are optimised with respect to a loss function. The work \cite{tempone99} proposes a technique for interpolation of incompressible flow by incorporating zero divergence as a constraint in the optimisation. We propose an alternative approach where the incompressibility takes the form of a regularisation term. In Section~\ref{sec: Problem formulation}, a mathematical formulation of the wind field reconstruction problem is formulated. The data is also presented, along with error estimates used to evaluate the models. Section~\ref{sec: Models} contains a short introduction to the interpolation models. The results are presented in section~\ref{sec: Results} and finally, discussed in Section~\ref{sec: Discussion}.
\section{Problem formulation}\label{sec: Problem formulation}
Let $\Omega\subset \R^2$ denote a geographic region. For the purpose of this report, $\Omega$ is the set of points contained within the Swedish borders. We define the horizontal wind field $\pmb{u}\colon \Omega\times[0,\infty)\to \R^2$ that maps every point in space $\pmb{x}=(x,y)\in \Omega$ and time $t>0$ to a velocity vector $\pmb{u}(\pmb{x},t)=(u(\pmb{x},t), v(\pmb{x}, t))\in \R^2$. Typically, air flow is assumed to satisfy the incompressible Navier-Stokes differential equations. Given an initial state at the time $t=0$ and a set of boundary conditions, it is possible to simulate and forecast the wind using these equations. In this report, the focus is shifted from forecasting to interpolation. In practice, this is a hard problem because wind measurements are only accessible in a sparse set of points in $\Omega$. We call this problem \textit{wind field reconstruction}. The above notation as well as the notation presented in sections~\ref{sec: Data} and \ref{sec: Spatial interpolation models} will be used extensively throughout our work.
\subsection{Data}\label{sec: Data}
The data was obtained from the Swedish Meterological and Hydrological Institute (SMHI). It contains a set of hourly wind velocity observations during the entirety of 2018, from $N=171$ weather stations scattered across Sweden  as shown in Figure~\ref{fig: station scatter}. Each measurement is collected 10 meters above ground. The positions of the stations are given by the altitude as well as the latitude and longitude. Each measurement is a 10-minute average and consists of two components: The wind speed, which is measured in meters per second, and the angle of the horizontal component of the wind vector, measured counter-clockwise relative to north.  Both measurements are rounded to varying degrees of precision depending on the station~\cite{SMHI}. The stations are not active at all times. In fact, there are occasional hours with as few as one station reporting measurements. The velocities are highly correlated over time, as demonstrated in Figure~\ref{fig: u-autocorr}. The data was processed before training. First, the latitude-longitude pairs were transformed to cartesian coordinates $\pmb{x}=(x,y)$ where $x$ is the eastward-measured distance and $y$ is the northward measured distance, as shown in Figure~\ref{fig: station scatter}. This was done using the \texttt{SWEREF~99~TM}\footnote{More information about the \texttt{SWEREF~99~TM} map projection can be found here:\\ \href{https://www.lantmateriet.se/en/maps-and-geographic-information/gps-geodesi-och-swepos/Referenssystem/Tvadimensionella-system/SWEREF-99-projektioner}{\textcolor{blue}{https://www.lantmateriet.se/en/maps-and-geographic-information/gps-geodesi-och-swepos/Referenssystem/Tvadimensionella-system/SWEREF-99-projektioner}.}} map projection. Secondly, the wind measurements were transformed from polar to cartesian coordinates $\pmb{u}=(u,v)$, where $u$ corresponds to the velocity component along the $x-$axis and $v$ corresponds to the velocity component along the $y-$axis. Lastly, the wind measurements from September 2018 were removed because of unusually high wind speeds. 

\begin{figure}
    \centering
    \begin{tikzpicture}
    \node at (0,0) {\includegraphics[width=0.7\linewidth]{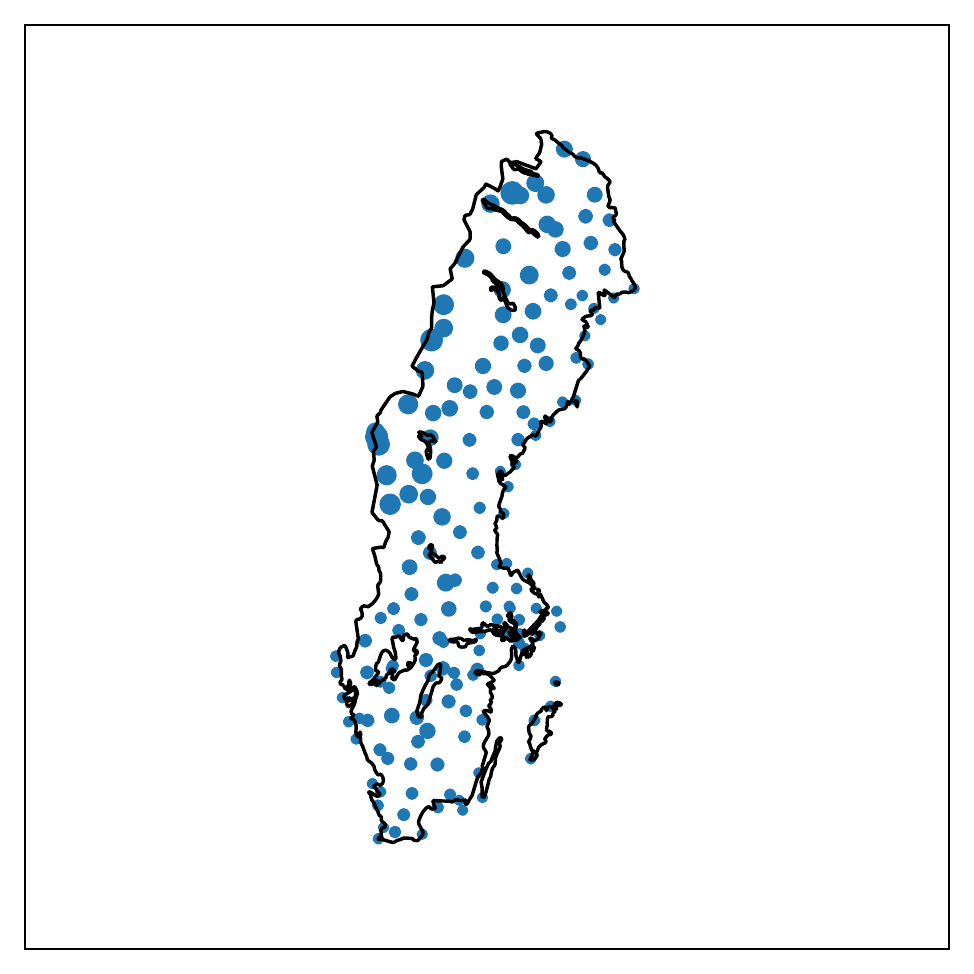}};
    \draw[thick, ->](-4,-4)--(-4,-3) node[above] {$y$};
    \draw[thick, ->](-4,-4)--(-3,-4) node[right] {$x$};
    \filldraw[white] (-4,-4) circle (3pt);
    \draw (-4,-4) circle (3pt);
    \filldraw (-4,-4) circle (1pt);
    \node[xshift=-6pt, yshift =-6pt] at (-4,-4) {$z$};
    \end{tikzpicture}
    \captionsetup{width=0.7\linewidth}
    \caption{The \texttt{SWEREF 99 TM} orthographic projection of Sweden. The weather stations are marked in blue and scaled according to their altitudes $z$. The coordinate system is drawn in the bottom left.}
    \label{fig: station scatter}
\end{figure}
\subsection{Spatial interpolation models}\label{sec: Spatial interpolation models}
For the purpose of this report, the definition of interpolation models is restricted to approximations of functions $\pmb u\colon \Omega\to \R^2$. The range is two dimensional since we are modelling horizontal wind velocities, and the region $\Omega$ is Sweden. In general, interpolation models are also used for other properties like temperature, pressure, population density etc. Nevertheless, we define a \textit{spatial interpolation model} as a map $f$ from a set of velocity measurements $\mathcal{D}=\{(\pmb x_n, \pmb u_n)\colon \pmb x_n \in \Omega,\; n=1,2,\dots,N\}$ to a vector field $f_{\mathcal{D}}\colon \Omega\to \R^2$. The process of evaluating a model $f$ on $\mathcal{D}$ is called \textit{training}. Usually, the training is done by minimising a loss function. The above notation will be used throughout our report. Note that there is an important distinction between $f$ and $f_{\mathcal{D}}$. The function $f$ represents a model, which is trained on data $\mathcal{D}$ and produces a vector field $f_{\mathcal{D}}$ which approximates $\pmb u$. Any time a symbol is indexed by the letter $\mathcal{D}$ or variations of it such as $\mathcal{D}_t$, it symbolises an interpolation model trained on that particular data set. A parametrised set of interpolation models $\{f^\theta\colon \theta\in\Theta\}$ of interpolation models $f^\theta$ is called an \textit{interpolation family}, and the parameters $\theta$ are called \textit{hyper parameters}. For example, the $K$-nearest neighbours models can be viewed as an interpolation family with hyper parameter $K$. The process of finding the best model $f^\theta$ in an interpolation family with respect to some quality of fit is called \textit{hyper optimisation}. In the next section, we present a quality of fit used for hyper optimisation and comparisons between different interpolation models. 
%
\subsection{Quality of fit}\label{sec: Quality of fit}
Given an interpolation model $f$ and observations $\{\mathcal{D}_t\}_{t\in T}$ at different times $t$ in a time span $T$, the quality of fit $\mathcal{Q}(f)$ is defined as the expected square loss with respect to the distribution $\rho(\mathrm{d}\pmb x,\mathrm{d}t)$ of the data in space and time $\Omega\times T$:
\[
    \mathcal{Q}(f) = \mathrm{E}\left[||f_{\mathcal{D}_t}(\pmb x)-\pmb u(\pmb x, t)||^2\right] = \int_{T}\int_{\Omega}||f_{\mathcal{D}_t}(\pmb x)-\pmb u(\pmb x,t )||^2\rho(\mathrm{d}\pmb x,\mathrm{d}t).
\]
We only have access to a limited sparse set of data, so calculating the exact value of $\mathcal{Q}$ is not possible. Even if the number of measurements was sufficient, model training is often cumbersome. Instead, we use a Monte-Carlo sampling average in time and a cross-validation scheme in space. Let $\{t_k\}_{k\in\mathcal{K}}$ be a set of independent time samples from $T$, indexed by a set $\mathcal{K}$, and take the measurements $\mathcal{D}_{k}=\{(\pmb x_n, \pmb u_{kn})\colon n=1,2,\dots, N_k\}$ to be the set of observations made at the time $t_k$. As stated in Section~\ref{sec: Data}, the weather stations are not always active, so the number of measurements $N_k$ at the time $t_k$ varies. Define a random partition of the weather stations into $M$ disjoint sets of equal size $\mathcal{D}_{k}^m, m=1,2,\dots,M$, and denote $\mathcal{D}_{k}^{-m} = \mathcal{D}_k\setminus \mathcal{D}_k^{m}$. The conditional expectations $\mathcal{Q}_t(f) = \mathrm{E}\left[||f_{\mathcal{D}_t}-\pmb u(\pmb x, t)||^2\mid t\right]$ of the quality of fit given the times $t_k$ are approximated using cross-validation:
\[
    \mathcal{Q}_{t_k}(f) \approx \widetilde{\mathcal{Q}}_k(f) \coloneqq \frac{1}{|\mathcal{D}_{k}|}\sum_{m=1}^M\sum_{(\pmb x',\pmb u')\in\mathcal{D}_k^m}|| f_{\mathcal{D}_k^{-m}}(\pmb x')-\pmb u'||^2.
\] 
We found that 5-fold cross-validation ($M=5$) struck a good balance between computation time and accuracy. Averaging $\widetilde{\mathcal{Q}}_k(f)$ over the samples in $\mathcal{K}$ yields a Monte Carlo sample estimate of $\mathcal{Q}(f)$:
\[
    \mathcal{Q}(f) \approx \widetilde{\mathcal{Q}}(f)\coloneqq \frac{1}{|\mathcal{K}|}\sum_{k\in \mathcal{K}}\widetilde{\mathcal{Q}}_k(f),
\]
also called the \textit{unexplained variance}. An alternative measure $\mathcal{E}$ is obtained from normalising $\mathcal{Q}(f)$ by the expected square wind speed $\mathrm{E}_{t,\pmb x}[||\pmb u(\pmb x, t)||^2]$, denoted $\mathcal{Q}(0)$:
\begin{equation}
    \label{eq: unexplained variance}
    \mathcal{E}(f) = \frac{\mathcal{Q}(f)}{\mathcal{Q}(0)} \approx \frac{\widetilde{\mathcal{Q}}(f)}{\widetilde{\mathcal{Q}}(0)}=:\widetilde{\mathcal{E}}(f).
\end{equation}
This measurement is referred to as the \textit{fraction of unexplained variance}. If the wind field $\pmb u$ has zero mean, the number $1-\mathcal{E}(f)$ is referred to as $\mathcal{R}^2$, or the \textit{coefficient of determination}. We estimate the variance of $\widetilde{\mathcal{Q}}$ as
\begin{equation}
    \mathrm{Var}(\widetilde{\mathcal{Q}}) =\frac{\mathrm{Var}(\mathcal{Q}_t)}{|\mathcal{K}|},\qquad \mathrm{Var}(\mathcal{Q}_t) \approx \frac{1}{|\mathcal{K}|}\sum_{k\in\mathcal{K}}(\widetilde{\mathcal{Q}}_k-\widetilde{\mathcal{Q}})^2.
    \label{eq: clt std}
\end{equation}
Provided that the data samples are independent and that the cross-validation estimates $\widetilde{\mathcal{Q}}_k$ of $\mathcal{Q}_{t_k}$ are exact and the sample size $|\mathcal{K}|$ is sufficiently large, the central limit theorem guarantees that the quality of fit follows a normal distribution. As a direct consequence, two standard deviations of $\widetilde{\mathcal{Q}}$ constitute an approximate 5th percentile confidence bound for $\mathcal{Q}$. Note that the confidence bound for $\mathcal{Q}$ is not sufficient when comparing models, since the errors can be highly correlated. Given two models $f,g$ we instead define the difference $\Delta \mathcal{Q}(f,g) := \mathcal{Q}(f)-\mathcal{Q}(g)$ and estimate it as $\Delta\widetilde{\mathcal{Q}}(f,g) := \widetilde{\mathcal{Q}}(f)-\widetilde{\mathcal{Q}}(g)$. The variance of $\Delta \widetilde{\mathcal{Q}}(f,g)$ is estimated similarly to \eqref{eq: clt std}:
\begin{multline}
    \mathrm{Var}(\Delta\widetilde{\mathcal{Q}}(f,g)) =\frac{\mathrm{Var}(\Delta\mathcal{Q}_t(f,g))}{|\mathcal{K}|},\quad \text{where}
    \\\mathrm{Var}(\Delta \mathcal{Q}_t(f,g)) \approx \frac{1}{|\mathcal{K}|}\sum_{k\in\mathcal{K}}(\Delta\widetilde{\mathcal{Q}}_k(f,g)-\Delta\widetilde{\mathcal{Q}}(f,g))^2.
    \label{eq: diff std}
\end{multline}
The equations \eqref{eq: clt std} and \eqref{eq: diff std} were used in Section~\ref{sec: Results} to determine the required sample size $|\mathcal{K}|$ for the 5th percentile confidence intervals to determine $\mathcal{Q}(f)$ and $\Delta\mathcal{Q}(f,g)$ with a precision of 1\% of the mean square wind speed. This was done with the 2018 wind data set for all the tested models $f$ and $g$ as the random Fourier features. Due to high autocorrelation and seasonality in the wind as demonstrated in Figure~\ref{fig: u-autocorr}, these confidence intervals do not necessarily generalise to larger time intervals $T$ than 2018. The quality of fit was used for both hyperparameter optimisation and validation of the models, but with different samples $\mathcal{K}$ to avoid overfitting.
\section{Models}\label{sec: Models}
\subsection{Fourier models}
In this section we introduce two Fourier series based models. At the end of the section, we arrive at the random Fourier features model, which is the main focus of our report. In Section~\ref{sec: Benchmarking}, we lay out some of the conventionally used interpolation models which are used for bench-marking. 
\subsubsection{Fourier series}\label{sec: Fourier series}
The Fourier series based model takes the form
\[
    \beta(\pmb x) = \sum_{k=1}^{K} \beta_k e^{i\omega_k\cdot\pmb x},\qquad \omega_k \in \R^2, \; \beta_k \in \C^2,\quad k = 1, 2,\dots, N,
\]
where $K$ is the number terms and $\omega_k\cdot \pmb x$ denotes the scalar product between $\omega_k$ and $\pmb x$. The Fourier series generalises to arbitrary dimensions of the input $\pmb x$, but in this report we chose $\pmb x = (x,y)$ to be just the horizontal coordinates, hence why $\omega_k$ is two-dimensional. In the Fourier series based model, $\pmb\omega=(\omega_1,\omega_2,\dots,\omega_K)$ is held fixed, and the parameters $\pmb{\beta} = (\beta_1,\beta_2, \dots , \beta_K)$ are estimated by optimising with respect to the expectation of a loss function $\ell\colon \R^2\times \R^2\to \R^+$:
\[
    \min_{\pmb{\beta}}\quad \mathrm{E}_{\tilde{\rho}}\left[\ell(\beta(\pmb x),\pmb u)\right],
\]
where the expectation is taken over the joint density $\tilde\rho$ of the data $\pmb x,\pmb u$. For our application, $\tilde{\rho}$ is unknown. Therefore, the expected loss is replaced by a Monte-Carlo sample estimate of the loss, called the \textit{empirical loss}. Given a data set $\mathcal{D}=\{(\pmb x_n, \pmb u_n)\}_{n=1}^N$, the empirical loss is expressed as
\[
    \frac{1}{N}\sum_{n=1}^N \ell(\beta(\pmb x_n), \pmb u_n) \approx \mathrm{E}_{\tilde{\rho}}\left[\ell(\beta(\pmb x), \pmb u)\right].
\]
For this work, we chose the following loss function:
\begin{equation}
    \label{eq: loss}
    \ell(\beta(\pmb x), \pmb u) = ||\beta(\pmb x)-\pmb u||^2 + \lambda ||\beta||_{S(2,2)}^2 + \eta ||\nabla \cdot \beta||_{L^2}^2.
\end{equation}
The expression $||\beta ||_{S(2,2)}^2$ denotes the second order squared Sobolev-norm of $\beta$. By orthogonality of the Fourier features, the Sobolev norm can be simplified to
\[
    ||\beta||_{S(2,2)}^2 = \sum_{k=1}^K \left(\gamma ^2||\omega_{k}||^4 + \gamma||\omega_k||^2 + 1\right)||\beta_k||^2.
\]
The hyper parameter $\gamma$ was added to allow for more flexibility in the penalty, and is equivalent to rescaling the input variable before applying the Sobolev norm. The expression $||\nabla \cdot \beta||_{L^2}^2$ is the squared $L^2$-norm of the divergence of $\beta$, which can be rewritten as
\[
||\nabla\cdot \beta||_{L^2}^2 = \sum_{k=1}^K  |\omega_k\cdot \beta_k|^2.
\]
The intended effect of the Sobolev norm is to dampen high frequencies, and the divergence penalty is supposed to simulate incompressible flow. The empirical loss using the loss function as defined in \eqref{eq: loss} is
\begin{equation}
    \frac{1}{N}\sum_{n=1}||\beta(\pmb x_n)-\pmb u_n||^2 + \lambda ||\beta||_{S(2,2)}^2 + \eta||\nabla \cdot \beta||_{L^2}^2.
    \label{eq: empirical loss}
\end{equation}
Here, $\lambda$ and $\eta$ are hyper parameters. In order to determine a suitable choice for $\pmb{\omega}$, assume the standard regression setting $u=f(x)+\epsilon$, where $\epsilon$ is zero mean and independent of $\pmb x$. Since the spatial region $\Omega$ is bounded, $f$ can be extended periodically over $\R^2$. That is, the relation $f(x+m\tau_x, y+n\tau_y)=f(x,y)$ is imposed on $f$ for all $\pmb x=(x,y)\in\R^2$, whole numbers $m,n$ and some two-dimensional period $\pmb\tau = (\tau_x,\tau_y)$. This means that $f$ can be written as a Fourier series. Thus, $f$ can in theory be approximated arbitrarily well by  $\beta$ as the number of terms $K$ tends to infinity. However, since there is only a limited amount of data and $\beta$ contains a limited number of terms, the choice of $\pmb\omega$ determines how well $\beta$ can approximate $f$. In the Fourier series based model, we settle on choosing $\pmb\omega$ as a square grid with side length $2M+1$, centered at the origin in the frequency domain:
\[
    \pmb\omega = \{(\pi\tfrac{m}{\tau_x}, \pi\tfrac{n}{\tau_y})\colon -M \leq m\leq M, \quad -M\leq n\leq M\}.
\]
As such, the Fourier series based model is a spatial interpolation family with the hyper parameters $\lambda, \eta, M$ and $\pmb \tau$. We went with $M=10$, and $\lambda,\eta,\pmb \tau$ were chosen as explained in the upcoming section.

\begin{remark} Minimising the expression in \eqref{eq: empirical loss} amounts to solving a system of linear equations with respect to the elements $\pmb \beta$. First define the $N\times K$ matrix $\pmb S$ such that $\pmb{S}_{n,k}=e^{i \omega_k\cdot \pmb x_n}$ for $k=1,\dots,K$ and $n=1,\dots,N$. Secondly, define a matrix $\pmb U_{n,i}=u_{n,i}$ for $i=1,2$ and $n=1,2,\dots,N$. Lastly, define three diagonal $K\times K$ matrices $\pmb \Lambda, \pmb D_1$ and $\pmb D_2$ such that $\pmb\Lambda_{kk} = \lambda(\gamma^2||\omega_k||^4+\gamma||\omega_{k}||^2 +1)$ and $\pmb D_{kki} = \sqrt{\eta}\omega_{ki}$ for $i=1,2$ and $k=1,2,\dots, K$. Denoting the row vectors of $\pmb\beta$ and $\pmb{U}$ by $\pmb\beta_i$ and $\pmb{U}_i$ respectively, the empirical loss takes the form
\[
    (\pmb D_1 \pmb \beta_1 + \pmb D_2\pmb \beta_2)^\dag (\pmb D_1 \pmb \beta_1 + \pmb D_2\pmb\beta_2)+\sum_{i=1,2} \tfrac{1}{N}(\pmb{S}\pmb{\beta}_i - \pmb{U}_i)^\dag(\pmb{S}\pmb{\beta}_i - \pmb{U}_i) + \pmb{\beta}_i^\dag \pmb{\Lambda} \pmb{\beta}_i,
\]
where $A^\dag$ denotes the Hermitian conjugate of the matrix $A$. Differentiating with respect to the real and imaginary parts of $\pmb \beta$, setting to zero and adding the equations together results in the system
\begin{align*}
    (\tfrac{1}{N}\pmb{S}^\dag\pmb{S}+\pmb\Lambda +\pmb D_1^\dag \pmb D_1)\pmb\beta_1 + \pmb D_1^\dag \pmb D_2 \pmb \beta_2&= \tfrac{1}{N}\pmb{S}^\dag\pmb{U}_1,\\
    (\tfrac{1}{N}\pmb{S}^\dag \pmb{S}+\pmb\Lambda + \pmb D_2^\dag \pmb D_2)\pmb\beta_2 + \pmb D_2^\dag \pmb D_1 \pmb  \beta_1 &= \tfrac{1}{N}\pmb{S}^\dag\pmb{U}_2.
\end{align*}
This is a linear system of equations which can be solved for example by singular value decomposition. The most noticeable difference from Tikhonov regression is the interaction between $\pmb \beta_1$ and $\pmb \beta_2$ due to the divergence penalty.
\end{remark}
\subsubsection{Random Fourier features}\label{sec: Fourier features}
The random Fourier features model and the Fourier series based approach start off similarly:
\[
    \beta(x) = \sum_{k=1}^{K} \beta_k e^{i\omega_k\cdot \pmb x},\qquad \omega_k \in \R^2, \; \beta_k \in \C^2,\quad k = 1, 2,\dots, N,
\]
The difference is that instead of just optimising with respect to $\pmb{\beta} = (\beta_1,\beta_2, \dots , \beta_K)$, random Fourier features aims at solving the harder problem
\[
    \min_{\pmb{\beta}, \pmb{\omega}}\quad \mathrm{E}\left[\ell(\beta(\pmb x),\pmb u)\right].
\]
That is, we also want to optimise with respect to $\pmb\omega$. In this way, the random Fourier features is similar to a fully connected neural network with one hidden layer and the activation function $x\mapsto e^{ix}$. The optimisation is traditionally done using some greedy method such as stochastic gradient descent, but in random Fourier features the expected loss is approximated by viewing ${\pmb\omega}$ as a random variable:
\[  
    \min_{\pmb{\beta}, \pmb{\omega}} \mathrm{E}\left[\ell(\beta(\pmb x), \pmb u)\right] \leq \mathrm{E}\left[\min_{\pmb{\beta}}\mathrm{E}\left[\ell(\beta(\pmb x),\pmb u)\mid \pmb \omega\right]\right].
\]
We use the loss function defined in \eqref{eq: loss}. Furthermore, we assume same the standard regression setting  $\pmb u=f(\pmb x)+\epsilon$ as presented in the previous section, where $\epsilon\in\R^2$ is zero mean and independent of $\pmb x$ and $f$ a periodic function. Lastly, the elements of $\pmb\omega$ are assumed to be independent and identically distributed according to the density $\rho$. In the Appendix we derive an upper bound
\[  
    \mathrm{E}\left[\min_{\pmb{\beta}}\mathrm{E}\left[\ell(\beta(\pmb x),\pmb u)\mid \pmb\omega\right]\right] \leq \frac{1+\lambda \overline{C}}{(2\pi)^2 K}\sqrt{\mathrm{E}\left[\frac{||\hat f(\omega)||^4}{\rho(\omega)^4}\right]} -\frac{1}{K}E[||f(\pmb x)||^2]+ \mathrm{E}[||\epsilon||^2]
\]
using the above assumptions as well as assuming that the distribution $\rho$ is discrete and has bounded moments up to a degree determined by the order of the derivatives used in the regularisation. Furthermore, we show that this upper bound is minimised by choosing $\rho(\omega)\propto||\hat f(\omega)||$, where $\hat f(\omega)$ are the Fourier coefficients for the Fourier series expansion of $f$. Since the function $||\hat f(\omega)||$ is not known a priori, this distribution has to be approximated somehow. The authors of \cite{kammonen2020} present an adaptive Metropolis algorithm for sampling from $\rho$ in a related setting. The main differences are that $f$ is $L^2$ integrable on the entirety of $\R^2$ and has a one-dimensional range. Drawing from the work in \cite{kammonen2020}, we devise an adaptive Metropolis algorithm for sampling the weights $\omega$ from $||\hat f ||$ as seen in Algorithm~\ref{alg: RFF}.
\begin{algorithm}
\SetAlgoLined
\SetKwData{Left}{left}
\SetKwData{This}{this}
\SetKwData{Up}{up}
\SetKwFunction{Union}{Union}
\SetKwFunction{FindCompress}{FindCompress}
\SetKwInOut{Input}{input}
\SetKwInOut{Output}{output}
\Input{Rescaled data $\{x_n, y_n\}_{n=1}^N\subset [0,1]^2\times  \R^2$}
\Output{Random features $x\mapsto \sum_{k=1}^K \beta_k e^{i\omega_k\cdot x}$}
 $K\leftarrow$ Choose the number of frequencies\;
 $B\leftarrow$ Choose the number of steps\;
 $\sigma\leftarrow$ Choose a variance for the proposal kernel\;
 $\gamma\leftarrow$ Choose the exponent for the transition probability\;
 $\lambda\leftarrow$ Choose the Sobolev regularisation parameter\;
 $\eta\leftarrow$ Choose the divergence regularisation parameter\;
 $\pmb\omega\leftarrow$ the zero vector in $\R^{2K}$\;
 $\pmb\beta\leftarrow$ minimiser of the empirical~\eqref{eq: empirical loss} loss given $\pmb{\omega}$\;
 \For{$b\leftarrow 1,\dots, B$}{
    $r_{\mathcal{N}}\leftarrow$ standard normal random vector in $\R^{2K}$\;
    $r\leftarrow$ round the elements of $\sigma r_{\mathcal{N}}$ to the nearest integer\; 
    $\pmb{\omega}'\leftarrow\pmb{\omega}+r$\;
    $\pmb\beta'\leftarrow$ minimiser of the empirical loss~\eqref{eq: empirical loss} given $\pmb{\omega}'$\;
    \For{$k\leftarrow 1,\dots , K$}{
        $\alpha \leftarrow$ sample from uniform distribution on $[0,1]$\;
        \If{$||\beta_k'||^\gamma/||\beta_k||^\gamma > \alpha$}{
            $\omega_k\leftarrow\omega_k'$\;
            $\beta_k\leftarrow\beta_k'$\;
        }
    }
 }
 $\pmb\beta\leftarrow$ minimiser of the empirical loss~\eqref{eq: empirical loss} given $\pmb\omega$\;
 $x\mapsto \sum_{k=1}^K \beta_k e^{i\omega_k\cdot x}$
 \caption{Discrete random Fourier features with Metropolis sampling}
 \label{alg: RFF}
\end{algorithm}
The random Fourier features algorithm can be parametrised as $\{\beta_\theta\colon \theta\in\Theta\}$ where the hyper parameter $\theta$ consists of the number of fourier frequencies $K$, the number of steps $B$, the periodicity $\tau$, the regularisation parameters $\sigma$ and $\gamma$, as well as the parameters $\lambda$ and $\eta$ in the Metropolis sampler. Similarly to the work~\cite{tempone99}, $\tau$ was chosen between two and three times the size of the region of interest, $\tau = (4\cdot 10^6,4\cdot 10^6)$. The number of frequencies was fixed to $400$ and the number of steps $B$ was fixed to $500$. Given a $d$-dimensional feature space, \cite{kammonen2020} shows that optimal choice for $\gamma$ as $K,N\to\infty$ with a fixed computational work is $3d-2$. Furthermore, the classical result from~\cite{oroberts2001} shows that the optimal variance of the proposal kernel for a general random walk Metropolis-Hastings algorithm is $\sigma \approx \frac{2.4^2}{d}$.\\\\
Since theoretical results listed above hinge on limit cases, they are not guaranteed to hold for the sparse wind measurements. Therefore, a partial grid search minimisation of the unexplained variance $\mathcal{E}(\beta_\theta)$ from \eqref{eq: unexplained variance} was carried out in a neighbourhood of the theoretically optimal values for $\gamma$ and $\sigma$, keeping $\tau, \lambda$ and $\eta$ fixed. Given estimations of the optimal $\gamma$ and $\sigma$, a similar grid search was then carried out on the parameters $\lambda$ and $\eta$. The number of steps was adjusted to insure convergence of the Markov chain, while also keeping the computation time at a minimum. Note that the random Fourier features algorithm is not guaranteed to find the optimal frequencies. There is ongoing research in this area, and an iterative greedy method which finds the optimal frequencies is currently in the works \cite{unpublished_luis}.

\subsection{Benchmarking models}\label{sec: Benchmarking}
The following interpolation models were used for benchmarking: Nearest neighbours, inverse distance weighting (IDW for short), Kriging, random forest, neural networks and a Fourier series based model introduced in the previous section. This section will serve as a short introduction to each of the methods as well as motivation as to why they are relevant. We use the same notation $\mathcal{D}=\{(\pmb x_n, \pmb u_n)\in \Omega\times\R^2\colon n=1,2,\dots,N\}$ for the measurements as in Section~\ref{sec: Spatial interpolation models}.
\subsubsection{Inverse distance weighting}\label{sec: IDW}
Inverse distance weighting methods $\{f^p\colon p \geq 0\}$ is an interpolation family of methods which are evaluated at a point $\pmb x$ by taking a weighted average of the velocity measurements in $\mathcal{D}$. Specifically, the weights $\alpha(\pmb x_n,\pmb x)$ for a model $f^p$ from the IDW family are proportional to $1/d(\pmb x_n,\pmb x)^p$ where  $d\colon \Omega\times \Omega \to [0,\infty)$ is a distance on $\Omega$. That is,
\[
    f_{\mathcal{D}}^p(\pmb x) = \frac{\sum_{n=1}^N \alpha(\pmb x_n, \pmb x)\pmb u_n}{\sum_{n=1}^N\alpha(\pmb x_n, \pmb x)},\quad \text{where}\quad\alpha(\pmb x_n,\pmb x)=\frac{1}{d(\pmb x_n, \pmb x)^p}.
\]
The singularities at $\pmb x=\pmb x_n, n=1,2,\dots,N$ are removed by setting $f_{\mathcal{D}}^p(\pmb x_n) = \pmb u_n$. The hyper parameter $p$ adjusts the amount of influence each data point has over its immediate surroundings. Letting $p=0$ will result in all points weighing equally everywhere, i.e. taking the arithmetic mean of the data. Letting $p\to\infty$ will result in the nearest neighbor method. Usually, $p$ is chosen somewhere inbetween. A common shortcoming of IDW is that the interpolated values are bounded by the maximum and minimum values of the data and therefore IDW fails to predict unobserved extreme points. The main benefits are interpretability and relatively short training time. In this report, two values of $p$ were tested, namely $p=2$ and $p=\infty$ (i.e. nearest neighbors). Furthermore, we used the horizontal distance between points, ignoring altitudes.
\subsubsection{Kriging}\label{sec: Kriging}
Kriging is a statistical approach to spatial interpolation. The true velocity $\pmb u$ is assumed to satisfy the equality
\[
    \pmb u(\pmb x) = \mu(\pmb x) + \epsilon_{\pmb x}, 
\]
where $\mu\colon \Omega\to \R^2$ is a deterministic function and $\epsilon_{\pmb x}$ is a zero mean stochastic process over $x$ with a specific covariance structure given by the \textit{covariogram} $C$:
\[
    \mathrm{Cov}(\epsilon_{\pmb x}, \epsilon_{\pmb x'}) = \mathrm{E}[\epsilon_{\pmb x}\epsilon_{\pmb x'}] = C(|\pmb x-\pmb x'|).
\]
Kriging works by first using a deterministic model to estimate the mean $\mu$ using the data, resulting in a trained model $\mu_{\mathcal{D}}$. For each $(\pmb x_n,\pmb u_n)\in\mathcal{D}$, the residuals $\epsilon_{\pmb x_n}$ are estimated as $\epsilon_{\pmb x_n}\approx \epsilon_n = \pmb u_n-\mu_{\mathcal{D}}(\pmb x_n)$. Next, the covariogram is fitted using the residuals $\epsilon_n$. The last part of the fit is to, given an input $\pmb x$, find a linear combination $\sum_n\omega_n\epsilon_n  = \omega^T \epsilon$ such that the weights $\omega$ minimise the expected square error $\mathrm{E}[(\epsilon_{\pmb x} - \sum_n\omega_n\epsilon_{\pmb x_n})^2]$. This is equivalent to solving the linear system of equations 
\[
    \sum_{m=1}^NC(||\pmb x_n-\pmb x_m|)\omega_m = C(|\pmb x_n - \pmb x|), \quad n=1,2,\dots, N
\]
Thus, defining the $N\times N$ matrix $\pmb{C}_{mn}=C(|\pmb x_m-\pmb x_n|)$ and the $N\times 1$ vector $c_n(\pmb x)=C(|\pmb x_n-\pmb x|)$ for $n,m = 1,2,\dots,N$, the final estimation $f_{\mathcal{D}}$ of $u$ takes the form
\[
    f_{\mathcal{D}}(\pmb x) = \mu_{\mathcal{D}}(\pmb x) + \epsilon^T \pmb C^{-1}c(\pmb x).
\]
By also modelling the so-called \textit{variogram} $E[(\epsilon_{\pmb x}-\epsilon_{\pmb x'})^2]$, it is possible to calculate the variance of the residuals $\epsilon_{\pmb x}$ and thereby obtain an estimate of the uncertainty at each point $\pmb x$. Furthermore, Kriging can be combined with virtually any other unbiased deterministic interpolation model by interpreting it as the mean $\mu(\pmb x)$. However, when the statistical assumptions do not hold, Kriging can perform poorly. Machine learning methods such as random forests have been successful in beating Kriging for various spatial interpolation tasks, see for example~\cite{appelhans2015, hengl2018}. The authors in~\cite{hengl2018} argue that even though Kriging might be redundant in terms of accuracy, it remains valueable tool for understanding data, exactly because of its statistical properties and interpretability. We used a version of Kriging called \textit{Universal Kriging}, which differs from Kriging in that a linear regression approximation of the mean $\mu(\pmb x)$ and the residuals $\epsilon_{\pmb x}$ are fitted to the data simultaneously, resulting in a joint system of equations for all the weights. The python package \texttt{pykrige} was used to implement Universal Kriging with a linear variogram.
\subsubsection{Random forest}\label{sec: random forest}
Random forests are constructed by averaging an ensemble of regression trees. Each tree is trained on a random sample of the data $\mathcal{D}$ and each split in the tree is chosen by randomly selecting one feature out of the input features~\cite{breiman2001}. Random forests have been used sucessfully in spatial interpolation problems, for example to predict temperatures on and around Kilimanjaro, Tanzania \cite{appelhans2015} as well as mineral concentrations \cite{hengl2018}. The main drawback of random forests is lack of interpretability. In this report we used a random forest with $200$ trees with mean square loss for splitting, and unlimited tree depth. The forest was implemented in \texttt{scikit-learn}. Furthermore, the Random forest was trained on a polynomial feature map $\phi$ of the horizontal coordinates $\pmb x= (x,y)$ and the altitude $z$. Specifically, $\phi$ maps the coordinates to all polynomial features $1^{p_1}x^{p_2}y^{p_3}z^{p_4}$ with a total order $p_1+p_2+p_3+p_4$ of $\leq 3$. That is, the trained model $f_{\mathcal{D}}$ is a composition $h\circ \phi$ of the feature map $\phi$ and the random forest $h_{\mathcal{D}}\colon \R^{20}\to \R^2$.
\subsubsection{Feedforward neural network}
Feedforward neural networks have been used extensively in different areas of applied mathematics. In this report, only a specific family of feedforward neural networks were considered. Namely, the networks are characterised by $3$ fully connected hidden layers, a constant number of $n$ nodes in each layer and the ReLU activation function. The input layer consists of the three spatial coordinates $x,y$ and $z$. The weights were optimised using the Adam algorithm~\cite{kingma2017adam}, with respect to the $L_2$-regularised loss. The network was implemented in \texttt{TensorFlow}, and hyper parameter optimisation of the number of nodes and regularisation parameter was done with a grid search on the unexplained variance. 
\subsubsection{Weighted linear combination of model}
Given a set of $n$ interpolation models $\{f^1, f^2, \dots f^n\}$ and a data set $\mathcal{D}_t$, we can improve on the individual models by forming a weighted average
\[
    f_{\mathcal{D}_t} = \alpha_1 f_{\mathcal{D}_t}^1 + \alpha_2 f_{\mathcal{D}_t}^2 + \dots + \alpha_n f_{\mathcal{D}_t}^n,
\]
Where $\alpha_1,\alpha_2,\dots,\alpha_n$ are the weights. The weights are regarded as hyper parameters. If the number of models $n$ is not too high, there is little risk of overfitting, and the hyper parameters can simply be directly fitted to minimise the quality of fit. In Section~\ref{sec: Results}, we use this method to combine the random forest and random Fourier features models.

\section{Results}\label{sec: Results}
We begin the results section by establishing a suitable choice for the number of time samples $|\mathcal{K}|$, as discussed in Section~\ref{sec: Quality of fit}. We are looking to satisfy two main conditions. First, $\widetilde{\mathcal{Q}}$ needs to be approximately normally distributed. As seen from Figure~\ref{fig: clt convergence}, the central limit theorem seems to hold for estimates of $\mathcal{E}$ with sample sizes $|\mathcal{K}|>50$. Using normality of $\mathcal{Q}$ and $\mathcal{E}$ means that two standard deviations of  correspond to the 5th percentile confidence intervals. Secondly, $|\mathcal{K}|$ needs to be sufficiently large for the error to be reasonably small. We went with $|\mathcal{K}|=500$, corresponding to $\mathrm{Var}(\widetilde{\mathcal{E}})\approx 0.1$ or a $1\%$ relative error in $\mathcal{Q}$, as mentioned in Section~\ref{sec: Quality of fit}.\\\\
The quality of fit measurements reported in table~\ref{tab: results} were all obtained using this sample size, and the reported uncertainty corresponds to two standard deviations, estimated according to~\eqref{eq: clt std}. As the table shows, the confidence bounds vary slightly depending on the model. The same sample size was also used for the differences $\Delta \mathcal{Q}$ between the quality of fit of the benchmarking models and the random Fourier features model listed in Table~\ref{tab: delta results}, as well as the hyper parameter grid searches shown in Figures~\ref{fig: div reg grid}~and~\ref{fig: gamma sigma grid}. The optimal hyperparameters for the random Fourier features model were $0.01$ for the Sobolev regularisation constant $\lambda$, $0.001$ for the divergence penalty $\eta$, $1.4$ for the exponent $\gamma$ and $2.25$ for the step size $\sigma$ in the proposal kernel in the adaptive Metropolis algorithm (see Algorithm \ref{alg: RFF}). Additionally, running the Metropolis algorithm was chosen for $B=500$ steps struck a good balance between convergence and computation time. We omit the hyper optimisation results for the benchmarking models, which were all performed using the same type of grid search methods.\\\\

\begin{figure}
    \centering
        \includegraphics[width = 0.6\linewidth]{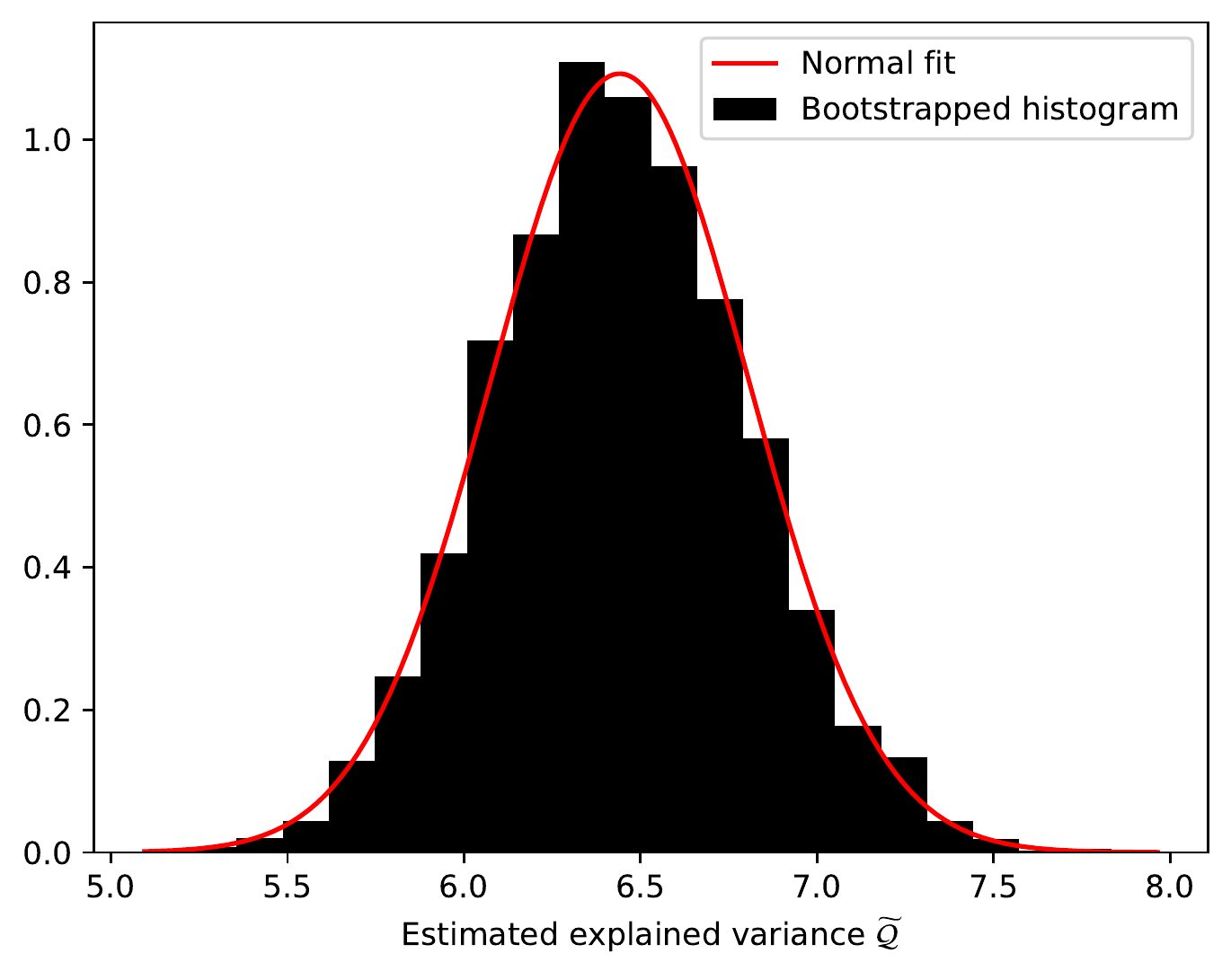}
        \captionsetup{width = 0.6\linewidth}
        \caption{A histogram of 5000 estimations of $\mathcal{Q}(f)=\frac{1}{|\mathcal{K}|}\sum_{k\in\mathcal{K}}\mathcal{Q}_k(f)$ with $|\mathcal{K}|=50$, bootstrapped from a total of 500 samples of $\mathcal{Q}_k$. The model $f$ is the fourier series, and the orange line is a fitted normal distribution.}
        \label{fig: clt convergence}
\end{figure}
\begin{figure}
    \centering
    \begin{minipage}{0.5\linewidth}
    \centering
    \includegraphics[width=\linewidth]{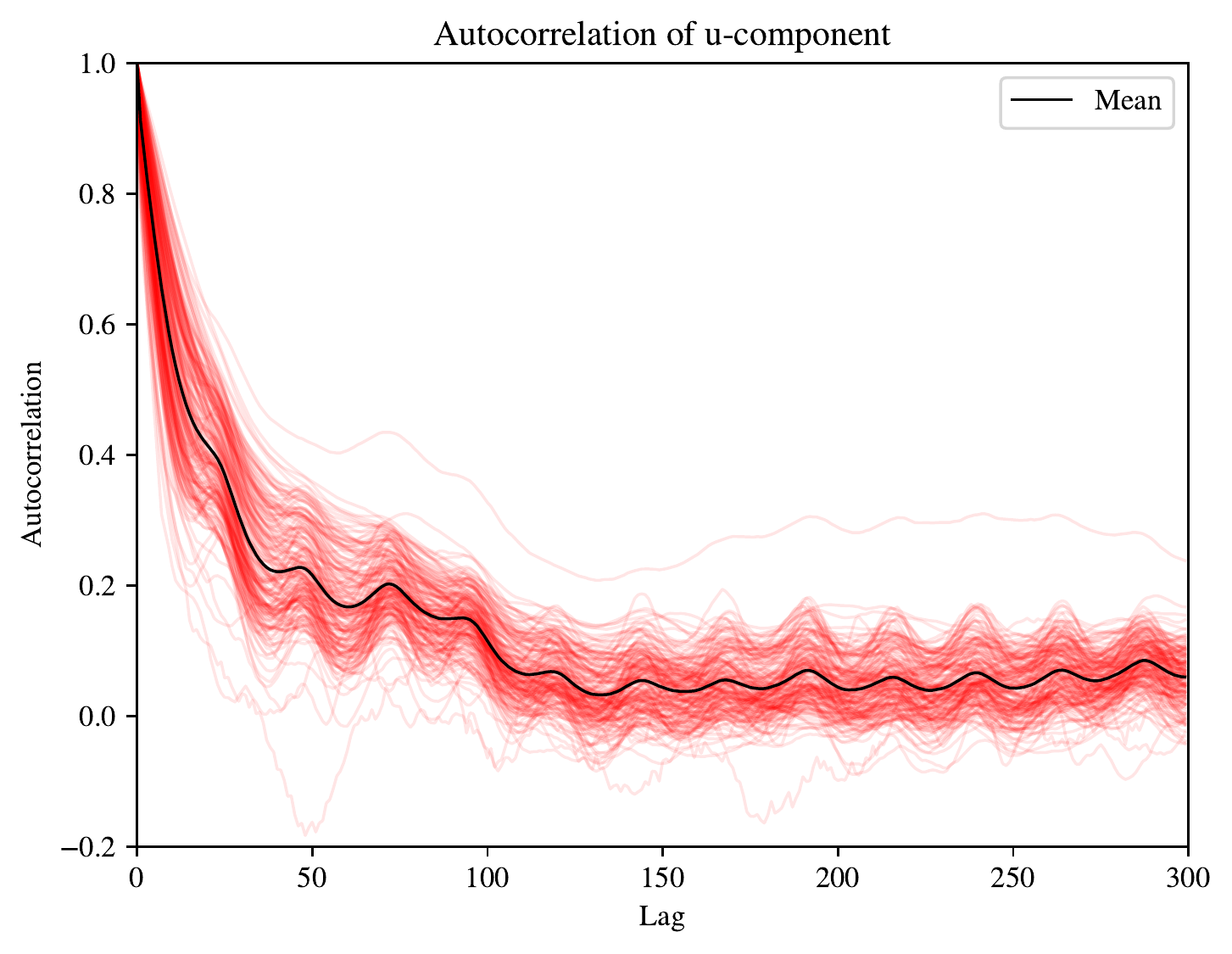}
    \captionsetup{width=0.9\linewidth}
    \caption{Autocorrelation of the east-west component $u$ of the wind $\pmb u$ for the available weather stations, up to a lag of 300 hours. Each red line represents the autocorrelation of one weather station.}    
    \label{fig: u-autocorr}
    \end{minipage}%
    \begin{minipage}{0.5\linewidth}
    \centering
    \includegraphics[width=\linewidth]{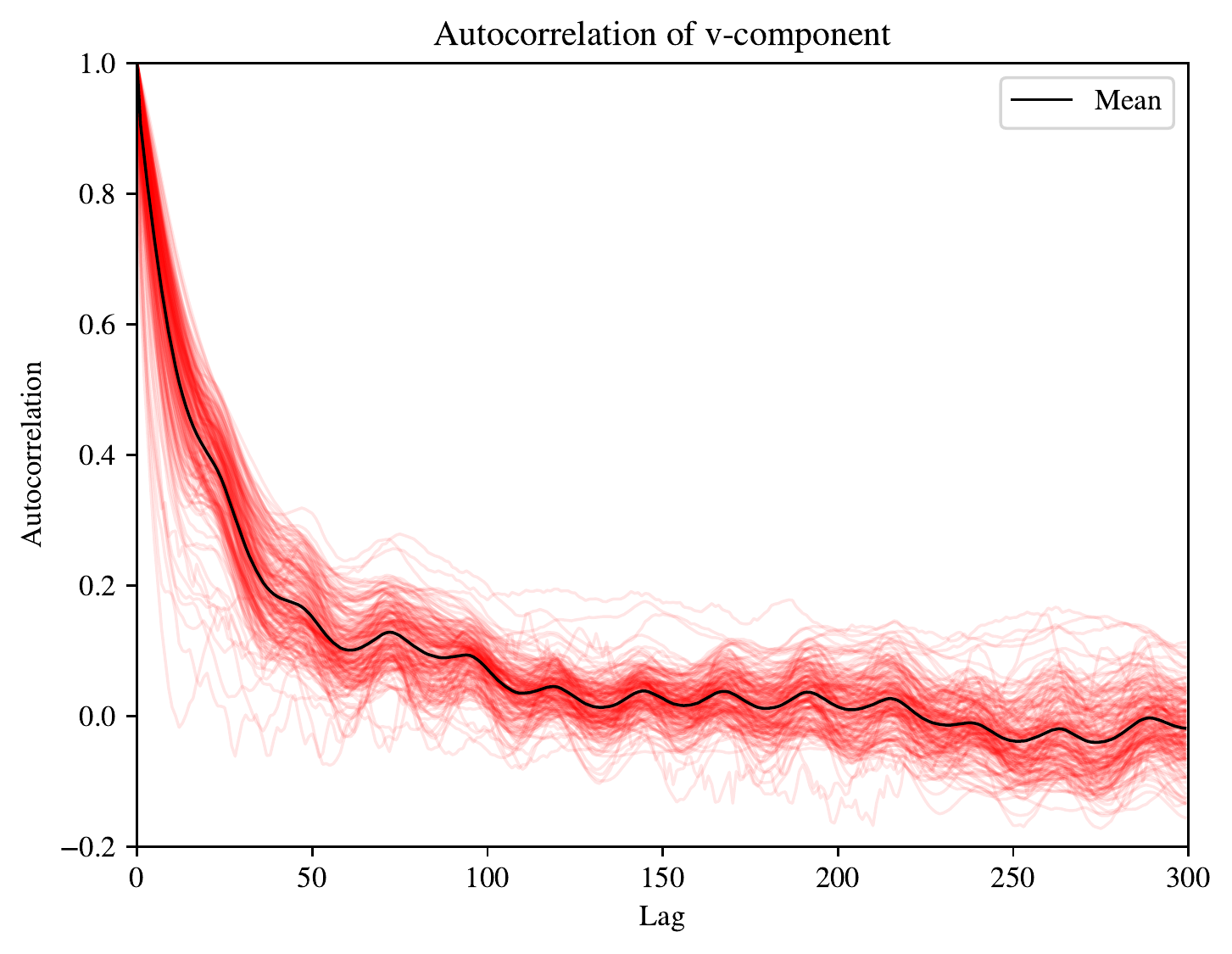}
    \captionsetup{width=0.9\linewidth}
    \caption{Autocorrelation of the north-south component $v$ of the wind $\pmb u$ for the available weather stations, up to a lag of 300 hours. Each red line represents the autocorrelation of one weather station.}    
    \label{fig: v-autocorr}
    \end{minipage}
\end{figure}
\renewcommand{\arraystretch}{1.4}
\begin{table}
    \centering
    \begin{tabular}{c c c}
        \hline
        Interpolation model &  $\widetilde{\mathcal{E}}\quad [1]$ &  $\widetilde{\mathcal{Q}}\quad  [m^2s^{-2}]$  \\
        \hline
Nearest neighbors & 0.628 $\pm$ 0.022 & 11.145 $\pm$ 0.386 \\
Inverse distance weighting & 0.407 $\pm$ 0.014 & 7.220 $\pm$ 0.250 \\
Universal Kriging & 0.388 $\pm$ 0.013 & 6.887 $\pm$ 0.235 \\
Random forest (RF) & 0.386 $\pm$ 0.013 & 6.862 $\pm$ 0.238 \\
Neural Network & 0.381 $\pm$ 0.013 & 6.762 $\pm$ 0.225 \\
Fourier series & 0.380 $\pm$ 0.013 & 6.740 $\pm$ 0.226 \\
\textbf{Random Fourier features (FF)} & 0.370 $\pm$ 0.012 & 6.569 $\pm$ 0.220 \\
FF and RF average & 0.357 $\pm$ 0.012 & 6.333 $\pm$ 0.212 \\
         \hline
    \end{tabular}
    \captionsetup{width = 0.9\linewidth}
    \caption{Quality of fit measurements $\mathcal{Q}$ and $\mathcal{E}$ with 5th percentile confidence intervals for a number of different models. The dimension is indicated at the top row, in square brackets.}
    \label{tab: results}
    \centering
    \begin{tabular}{c c c}
        \hline
        Interpolation model &  $\Delta\widetilde{\mathcal{E}}\quad [1]$ &  $\Delta \widetilde{\mathcal{Q}}\quad  [m^2s^{-2}]$  \\
        \hline
Nearest neighbours & 0.258 $\pm$ 0.010 & 4.576 $\pm$ 0.183 \\
Inverse distance weighting & 0.037 $\pm$ 0.003 & 0.651 $\pm$ 0.049 \\
Universal Kriging & 0.018 $\pm$ 0.002 & 0.318 $\pm$ 0.033 \\
Random forest & 0.017 $\pm$ 0.003 & 0.293 $\pm$ 0.060 \\
Neural Network & 0.011 $\pm$ 0.003 & 0.192 $\pm$ 0.056 \\
Fourier series & 0.010 $\pm$ 0.001 & 0.171 $\pm$ 0.017 \\
FF and RF average & -0.013 $\pm$ 0.002 & -0.236 $\pm$ 0.032 \\
         \hline
    \end{tabular}
    \captionsetup{width = 0.9\linewidth}
    \caption{Difference in quality of fit $\Delta\mathcal{Q}$ and $\Delta\mathcal{E}$ with 5th percentile confidence intervals for the benchmark models relative to the random Fourier features model (FF). The dimension is indicated at the top row, in square brackets. A positive number means that the given model is worse than random Fourier features.}
    \label{tab: delta results}
\end{table}
\begin{figure}
    \centering
    \begin{minipage}{0.5\textwidth}
        \includegraphics[width = 0.9\linewidth]{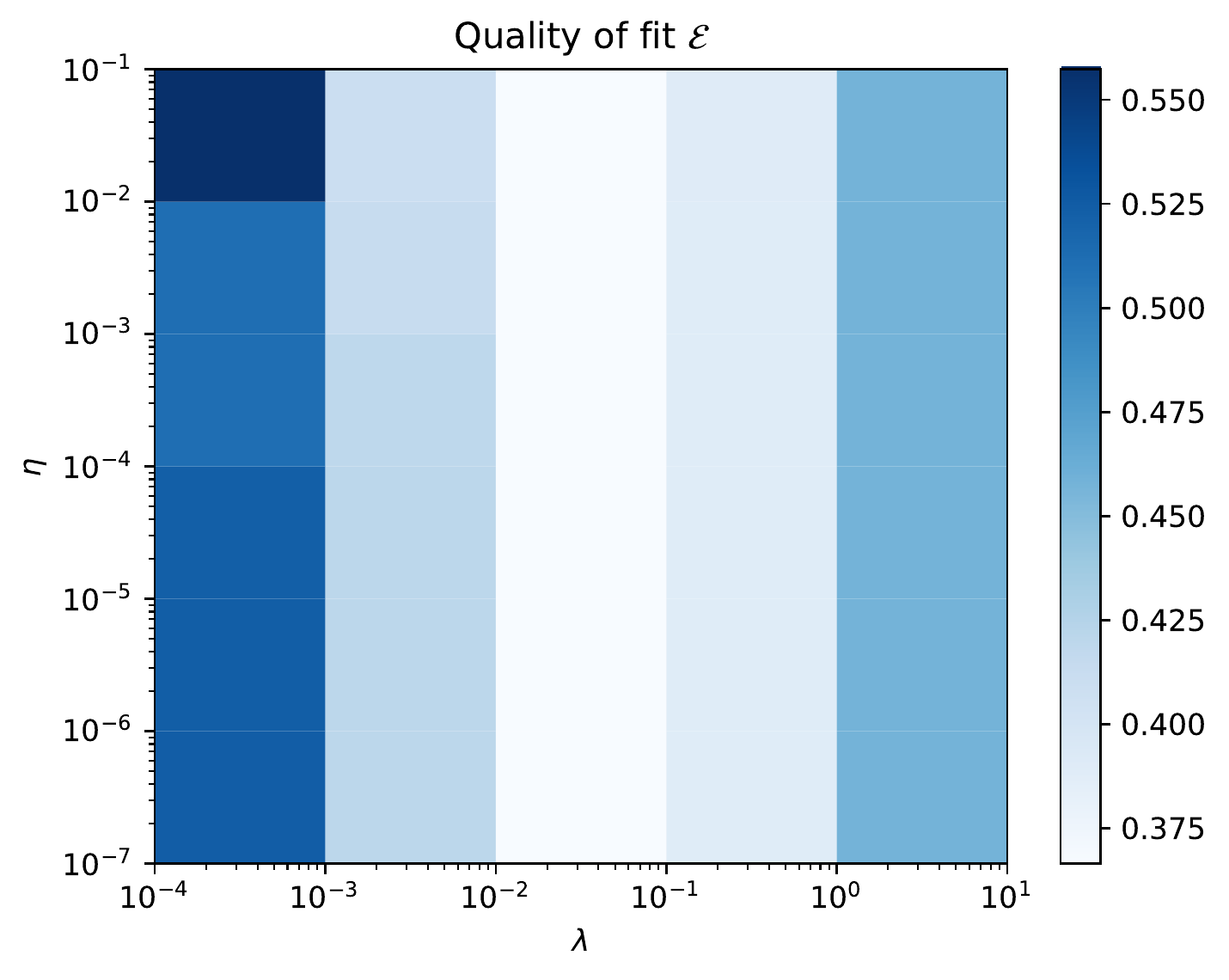}
        \captionsetup{width = 0.9\linewidth}
        \caption{Fraction of unexplained variance $\mathcal{E}$ in the random Fourier features method, as a function of the Sobolev penalty $\lambda$ and divergence penalty $\eta$ explained in~\ref{eq: loss}, with $\tau=4\times 10^7$, $\sigma=2.25$, $\gamma=1.25$.}
        \label{fig: div reg grid}
    \end{minipage}%
    \begin{minipage}{0.5\textwidth}
        \includegraphics[width = 0.9\linewidth]{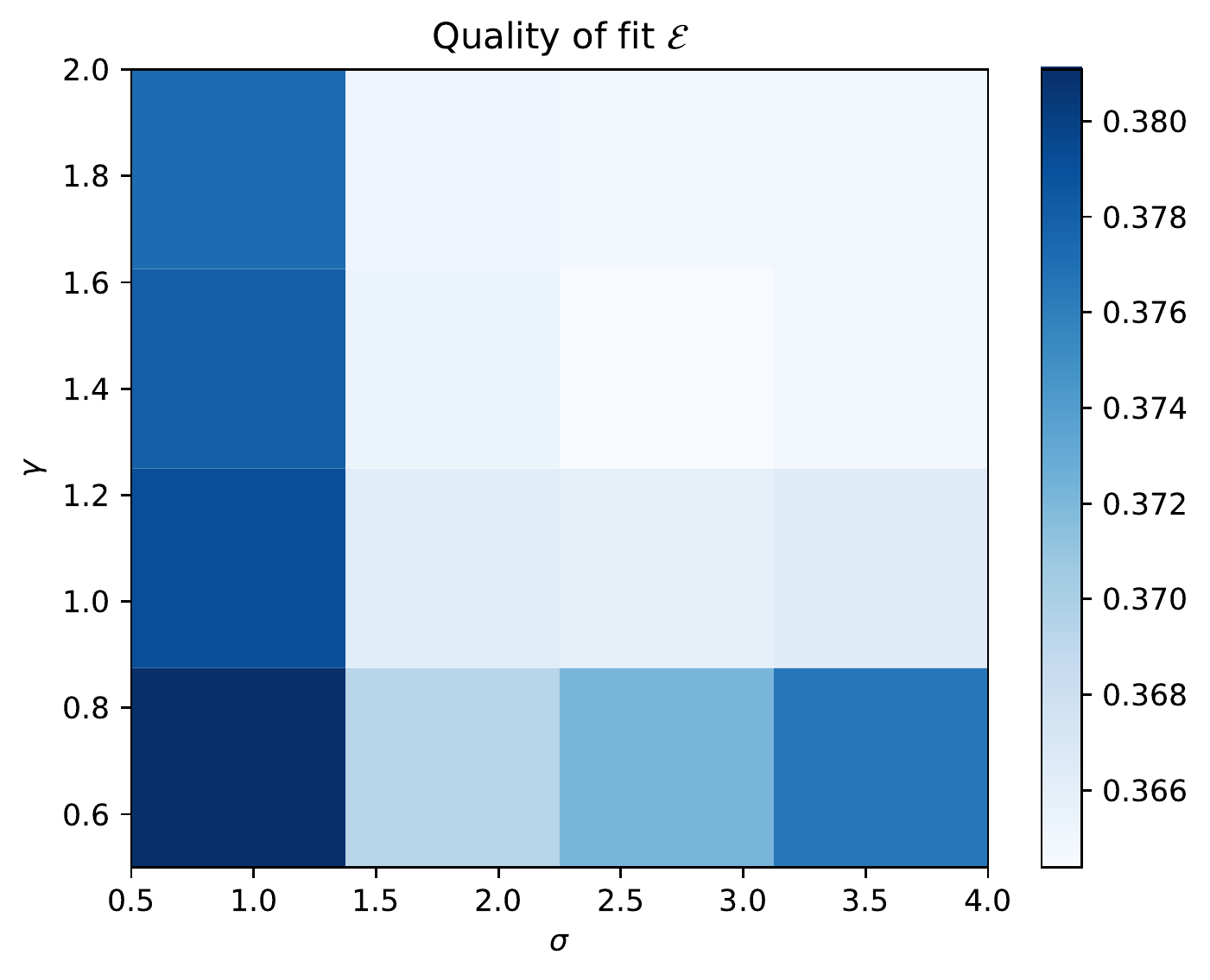}
        \captionsetup{width = 0.9\linewidth}
        \caption{Fraction of unexplained variance $\mathcal{E}$ as a function of the exponent $\gamma$ and step size $\sigma$ in the adaptive Metropolis algorithm, with $\tau=4\times 10^7$, $\eta=0.001$, $\lambda=0.01$.}
        \label{fig: gamma sigma grid}
    \end{minipage}
\end{figure}
\begin{figure}
    \centering
    \includegraphics[width = 0.8\linewidth]{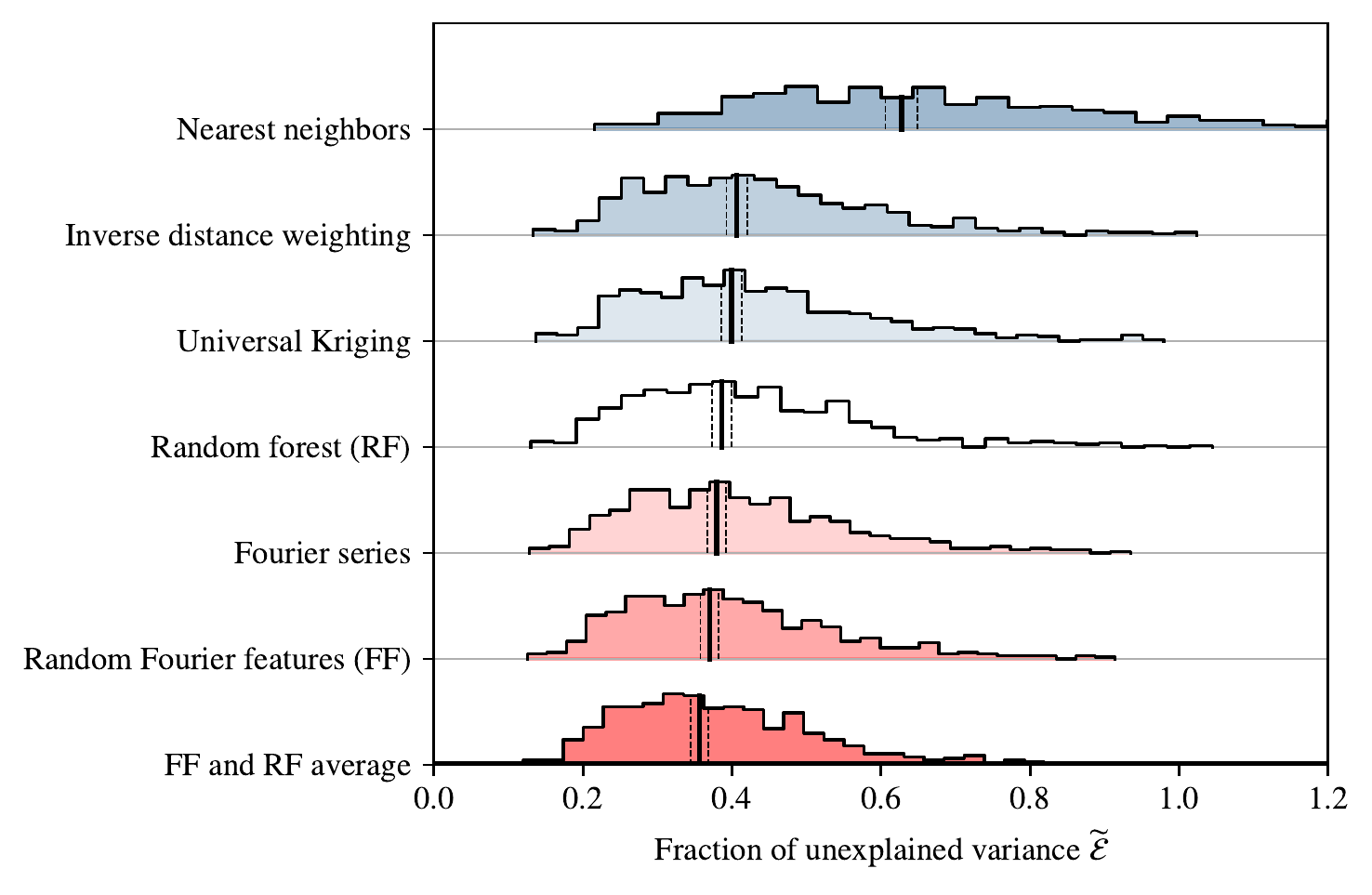}
    \caption{Ridge line plot showing histograms of the conditional fraction of unexplained variance $\widetilde{\mathcal{E}}_k(f) = \widetilde{\mathcal{Q}}_k(f)/\widetilde{\mathcal{Q}}(0), k\in\mathcal{K}$ with $|\mathcal{K}|=500$ independently sampled times, for different interpolation models $f$. The fraction of unexplained variance $\widetilde{\mathcal{E}}(f)=\frac{1}{|\mathcal{K}|}\sum_{k\in\mathcal{K}}\widetilde{\mathcal{E}}_k(f)$ is indicated as a black vertical line, and the uncertainty margin for $\widetilde{\mathcal{E}}(f)$ is indicated with dashed black lines.} 
    \label{fig: error histogram}
\end{figure}
\begin{figure}
    \centering
    \begin{minipage}{0.5\linewidth}
    \centering
        \includegraphics[width=0.8\linewidth]{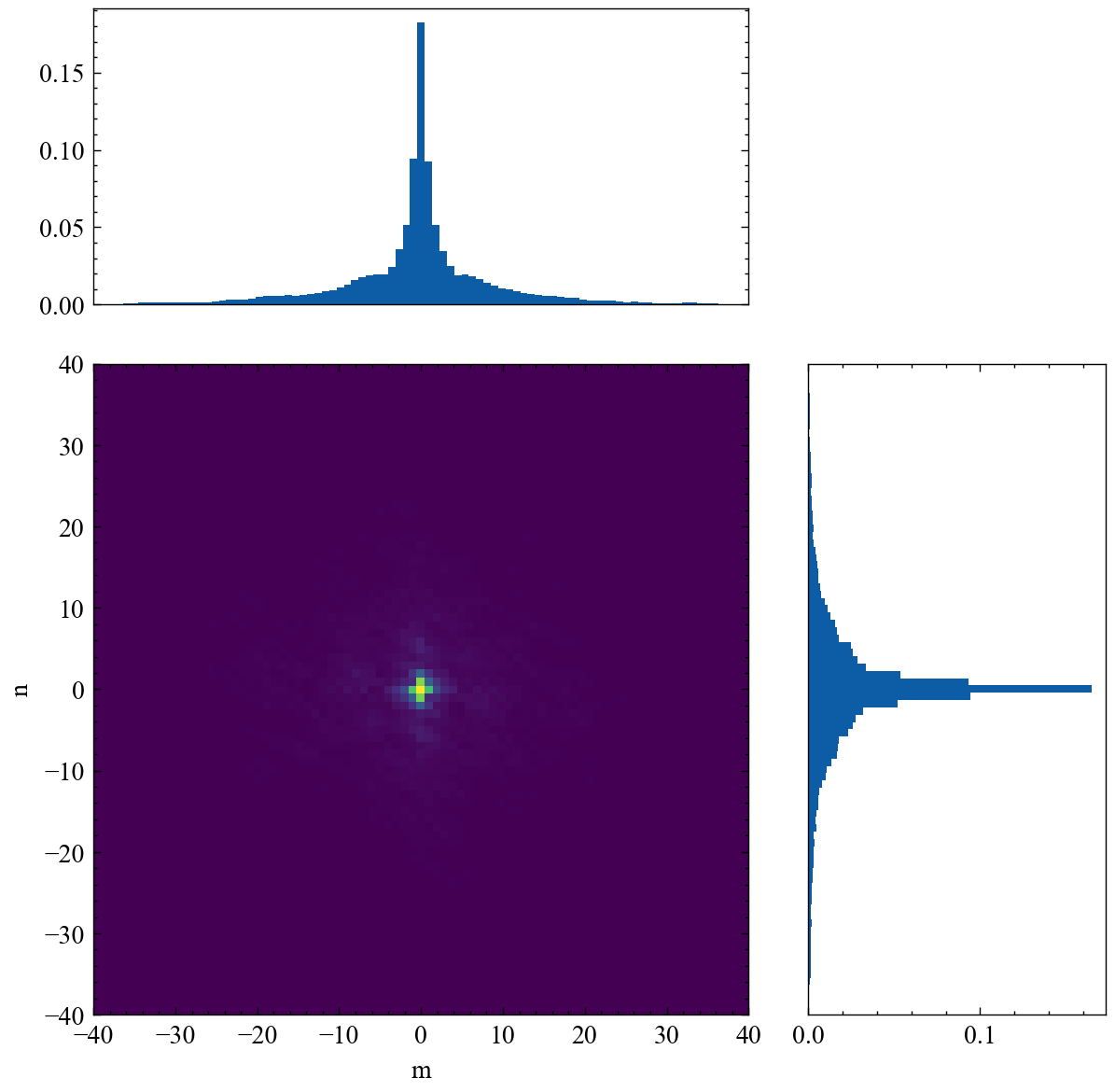}
        \captionsetup{width=0.8\linewidth}
        \caption{Approximation of the optimal sampling density $\rho(\omega)$ for the random Fourier features algorithm outlined in \ref{alg: RFF}, for measurement data from january 1st, 2018 at 6 am. The distribution was estimated by collecting all the frequencies from 1000 steps of the adaptive Metropolis algorithm into a histogram.}
        \label{fig: RFF histogram random}
    \end{minipage}%
    \begin{minipage}{0.5\linewidth}
    \centering
        \includegraphics[width=0.8\linewidth]{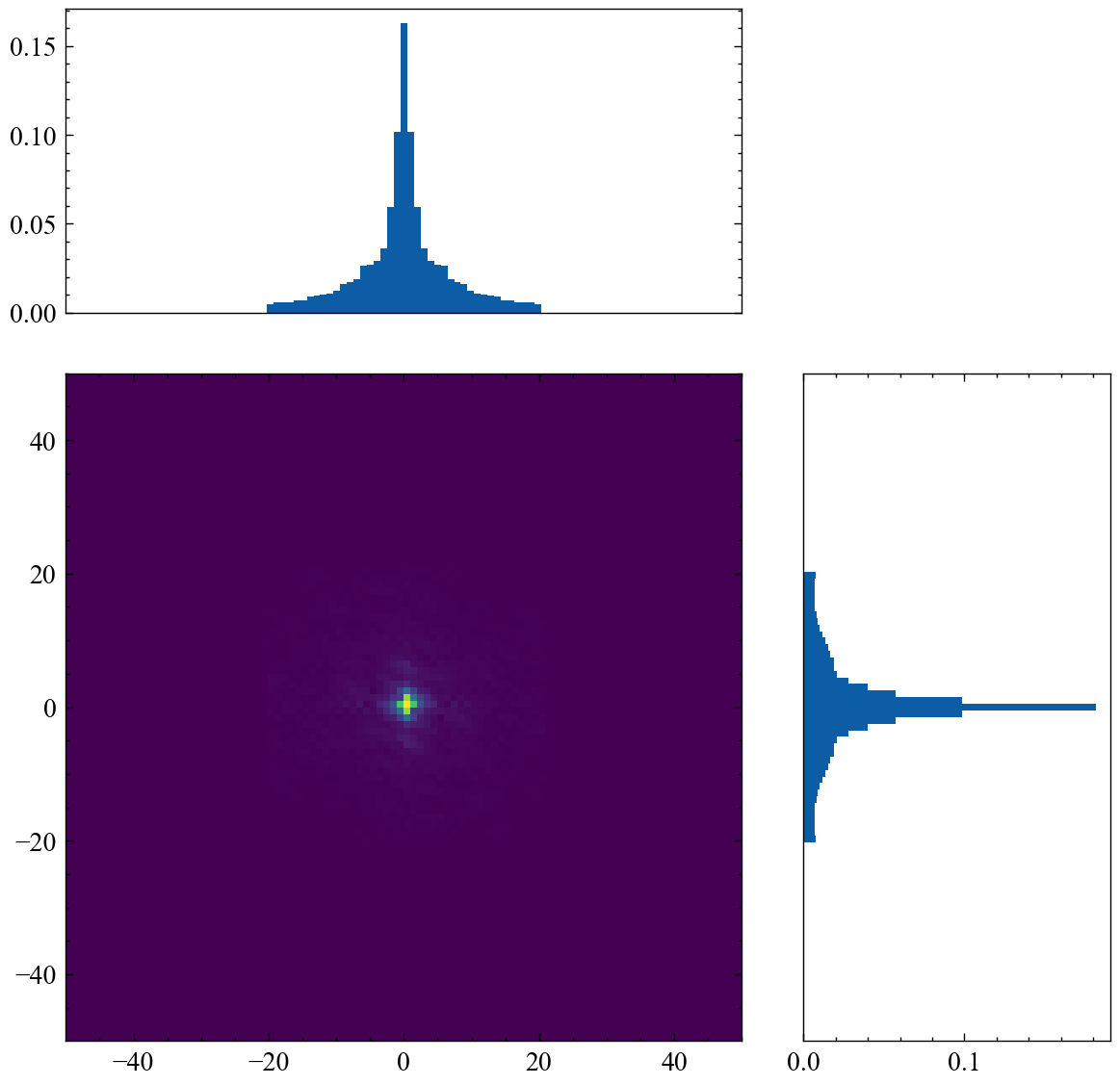}
        \captionsetup{width=0.8\linewidth}
        \caption{Heat map showing the magnitudes $||\beta||$ of the Fourier terms $\beta e^{i\omega^Tx}$ as a function of the frequencies $\omega$ for the Fourier series based model described in Section~\ref{sec: Fourier series}, for measurement data from January 1st, 2018 at 6 am. The support is a square grid with width 41, centered at the origin.}
        \label{fig: fourier histogram random}
    \end{minipage}
        \centering
        \includegraphics[width=\linewidth]{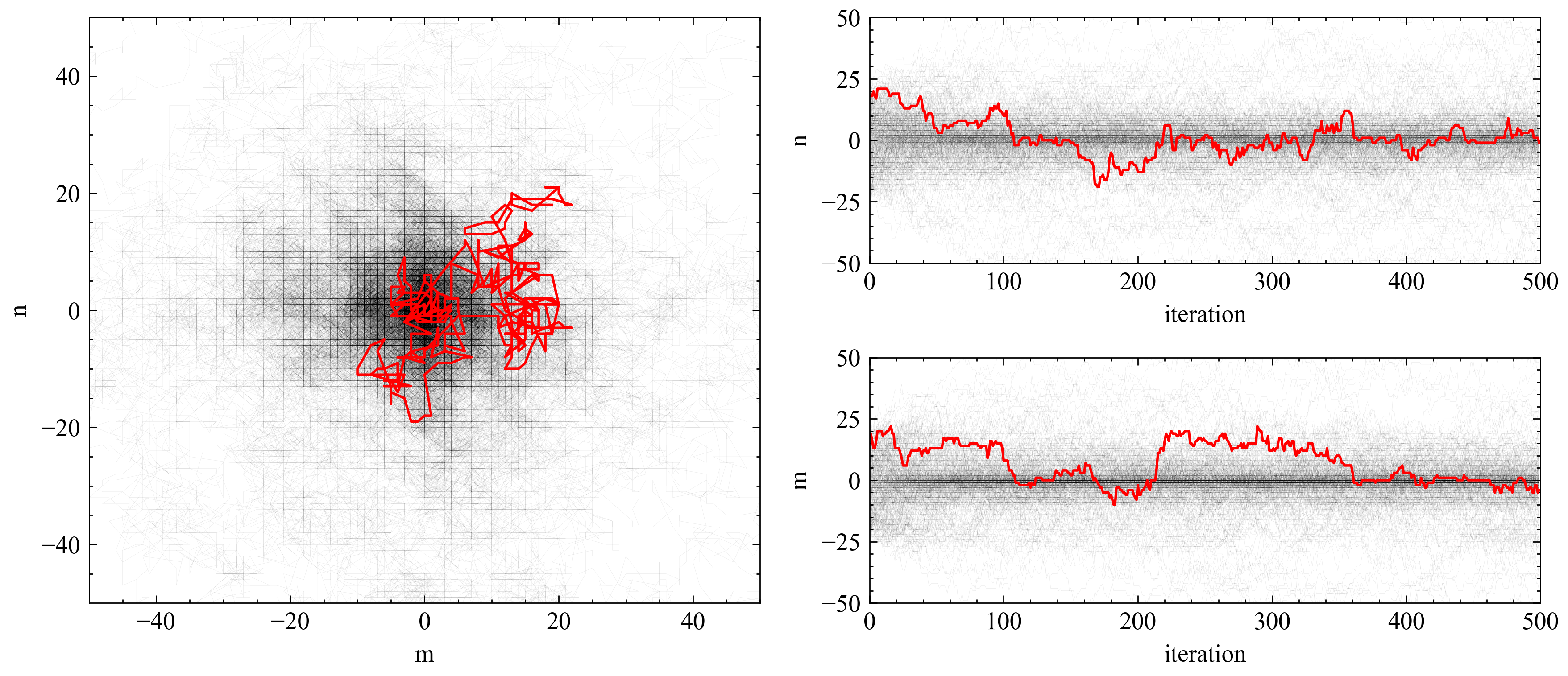}
        \captionsetup{width=\linewidth}
        \caption{Plot showing the trajectory of one frequency in \textcolor{red}{red}, sampled using the random Fourier features algorithm outlined in \ref{alg: RFF}. The algorithm was run on measurement data from January 1st, 2018 at 6 am. The remaining 400 trajectories are shown as black semi-transparent line plots. Some of the trajectories extend beyond the axis limits. A coordinate $(m,n)$ in the left plot represents an imaginary term $e^{i\pi\left(m\tfrac{x_1}{\tau_1}+n\tfrac{x_2}{\tau_2}\right)}$ in the Fourier series.}
        \label{fig: fourier trajectory}
\end{figure}
\clearpage
\section{Discussion}\label{sec: Discussion}
Table~\ref{tab: results} shows that the model consisting of an average between the random forest and random Fourier features model performed the best out of the tested models. Table~\ref{tab: delta results} consisting of the difference between quality of fit of the random Fourier features and remaining models indicates that this ordering is statistically significant, since none of the confidence intervals overlap with zero. Specifically, the transition from the fixed frequencies in the Fourier series based model to the randomly sampled frequencies of the random Fourier features results in a significant improvement. The numerical experiments found in~\cite{kammonen2020} indicate that the random Fourier features model wins out when more data is introduced. This is because the high frequency details can be captured by exploring remote parts of the frequency domain, whereas a fixed grid of frequencies centered on the origin cannot.\\\\
The success of the random forest-random Fourier features average hints that there might be more potential for improving the results using similar types of mixture models. Nevertheless, it is clear that the random Fourier features model is competitive in comparison to the benchmark models. A common argument for favouring statistical models such as Kriging before machine learning methods is easy access to precise error analysis, given that the prerequisite assumptions discussed in Section~\ref{sec: Kriging}. Whether or not this is worth trading in exchange for higher accuracy depends on the situation. Furthermore, the random Fourier features model has the upside of being easy to manipulate once it has been trained. It can be efficiently differentiated and integrated for analysis of physical quantities such as divergence, vorticity and energy.\\\\
A fraction of unexplained variance $0.36$ is not immediately recognised as a good result. For example, the works \cite{Erichson2020, Callaham2019, jin2020} achieve an unexplained variance in the order of $0.001$ modelling 2D fluid flow around a cylinder as well as sea mean temperatures. The main difference is that our data is extremely sparse, that there are no high resolution grids to train the models on, and that wind data in particular is known for high variance over time and space. The authors of \cite{Erichson2020} point out \cite[p.~18]{Erichson2020} that their model generalises poorly to unseen data  if the flow is non-stationary. It stands to reason that for their models to achieve the same efficiency on the sparse wind data used in this report, higher resolution in-sample measurements or simulations are required for training. That being said, we know from Figure~\ref{fig: u-autocorr} that the wind measurements are highly correlated over time, with clear seasonality. The models used in \cite{Erichson2020, Callaham2019, jin2020} are trained on multiple time samples whereas the spatial interpolation models used in our work only use the time aspect for hyperparameter optimisation. Therefore, extending the spatial interpolation models to take time into account could improve the results.\\\\
Another way of improving the results is to simply increase the quality or number of measurements by incorporating weather stations from nearby regions such as the Baltic sea, Finland, Norway and Denmark. The random Fourier features model is constructed to efficiently find high frequency details in the target function, which makes it suitable when increasing the resolution of the data. Additionally, using more features such as terrain convexity, terrain slope, terrain roughness, altitude, air pressure, temperature and humidity might allow for more accurate predictions (see \cite{appelhans2015}). We found that the wind speed was highly correlated with altitude as well as closeness to coastal areas, and adding altitudes to the feature space of the random forest resulted in a significant increase in accuracy. \\\\
As seen in Figure~\ref{fig: div reg grid}, the divergence penalty does not have much influence over the results. Albeit a compelling idea, it makes sense that penalising the divergence on a 2D slice of the wind field would not improve the results, since wind flow is not necessarily parallel with the ground. We suggest two alternative approaches that could produce significant improvements. Firstly, by incorporating altitudes as a third feature in the random Fourier features model we go from a 2D to a 3D setting in which zero divergence is a better approximation of reality. An alternative approach is to model the wind flow in two layers, and to couple the two layers by penalising a weighted sum of the divergence. More advanced methods can possibly incorporate no-slip or slipping boundary conditions on the ground surface, like the methods explored in~\cite{tempone99}.\\
\section{Conclusion}
In this report, we explored the potential for wind field reconstruction with sparse data using interpolation models. In particular, we investigated the random Fourier features model and compared it to popular statistical interpolation models such as Kriging, as well as modern machine learning methods such as random forests and Neural networks. Drawing from the work~\cite{kammonen2020}, we derived an optimal density for the Fourier frequencies and devised an adaptive Metropolis algorithm for sampling from this density. We showed that random Fourier features is competitive with respect to a time-space average of the square error and provided some ways to improve the results such as including terrain specific features.\\

\clearpage
\section{Acknowledgements}
The authors of this report would like to thank Prof. Anders Szepessy at KTH for his support and feedback. Furthermore, the work of Dmitry Kabanov, Luis Espath and Andreas Enblom in data processing and programming was integral in realising the project. Jonas Kiessling and Raúl Tempone were partially supported by the KAUST Office of Sponsored Research (OSR) under Award numbers URF/1/2281-01-01, URF/1/2584-01-01 in the KAUST Competitive Research Grants Program Round 8, and the Alexander von Humboldt Foundation, through the Alexander von Humboldt Professorship award. Emanuel Ström was supported by the KAUST Visiting Student Research Program (VSRP).

\section{Appendix}
In this proposition, we derive an upper bound for the minimum of the expected loss $\mathrm{E}\left[\ell(\beta(\pmb x),\pmb u)\right]$ for the Fourier features model. We assume the standard regression setting where $\pmb u = f(\pmb x) + \epsilon$ and $\epsilon\in\R^2$ is independent of $\pmb x$, and $\mathrm{E}[||\epsilon||^2]=\sigma^2$. Let $\hat f(\omega)$ define the Fourier coefficients of $f$ and suppose for simplicity that $f$ is defined on the domain $X = [0,2\pi]\times [0,2\pi]$. That is, $f$ can be expressed as the Fourier series
\[
    f(\pmb x) = \frac{1}{2\pi}\sum_{\omega\in\Z^2} \hat f(\omega)e^{i\omega\cdot \pmb x}.
\]
For the random Fourier features model, we choose 
\[
    \beta(\pmb x) = \sum_{k=1}^K \beta_k e^{i\omega_k \cdot \pmb x},
\]
where  $\pmb\beta = (\beta_1, \beta_2, \dots,\beta_K)$ are the complex-valued two-dimensional coefficients and $\pmb \omega = (\omega_1, \omega_2, \dots ,\omega_K)$ are independent and identically distributed according to a discrete distribution $\rho\colon \Z^2\to [0,\infty)$. The loss function is defined as
\[
    \ell(\beta(\pmb x), \pmb u) = ||\beta(\pmb x) - \pmb u||^2 + \lambda ||\mathcal{L}\beta ||^2,
\]
where
\[
    ||\mathcal{L}\beta ||^2 = \int_{X}\overline{ \mathcal{L}\beta}(\pmb x)\mathcal{L}\beta(\pmb x)\mathrm{d}\pmb x,
\]
 and $\mathcal{L} = \sum_{m=1}^M c_m\partial_1^{\alpha_{m,1}}\partial_2^{\alpha_{m,2}}$ is a linear differential operator with derivatives of at most order $d$ (i.e. $\alpha_{m,1}+\alpha_{m,2} \leq d$), for example $\mathcal{L}\beta = \nabla \cdot \beta = (\partial_1 + \partial_2)\beta$. The distribution $\rho$ is assumed to exist in a family a family $\mathcal{P}$  of discrete distributions:
\[
    \mathcal{P} :=\left\{\rho\colon \Z^2\to (0,\infty)\quad\Big|\quad \rho(\omega)>0 \quad \text{and} \quad \mathrm{E}[|\omega_i|^{m}]< C, \quad 0\leq m\leq 4d\quad i=1,2\right\}
\]
 where $C$ is a positive real number. Thus, $\rho\in\mathcal{P}$ is strictly positive and has uniformly bounded moments $\sum_{\omega\in \Z^2} |\omega_i|^{m}\rho(\omega)$ of degree up to $4d$.  Lastly, we make the assumption that
 \begin{equation}\frac{||\hat f(\omega)||}{\sum_{\omega' \in\Z^2}||\hat f(\omega')||}\in
 \mathcal{P}.
 \label{eq: f in P}
 \end{equation}
 Which means that $f(\pmb x)$ has to be a member of the Sobolev space $W^{4d, 2}(X)$.
 \begin{proposition}
 In the above setting, the following holds:
 \begin{enumerate}[(a)]
     \item The minimum of $\mathrm{E}[\ell(\beta(\pmb x),\pmb u)]$ with respect to the coefficients $\pmb\beta$ can be bounded:
 \[
    \mathrm{E}\left[\min_{\pmb\beta\in \C^{2K}}\mathrm{E}\left[\ell(\beta(\pmb x),\pmb u)\mid \pmb \omega\right]\right] \leq \frac{1+\lambda \overline{C}}{(2\pi)^2 K}\sqrt{\mathrm{E}\left[\frac{||\hat f(\omega)||^4}{\rho(\omega)^4}\right]} + \sigma^2 - \frac{1}{K}\mathrm{E}[|| f(\pmb x) ||^2], 
 \]
 where $\overline{C} > 0$.
 \item Furthermore, this upper bound is minimised by the distribution
 \[
    \rho(\omega) = \frac{||\hat f(\omega)||}{\sum_{\omega'\in\Z^2}||\hat f(\omega')||},\quad \omega \in \Z^2.
 \]
 \end{enumerate}
 
\end{proposition}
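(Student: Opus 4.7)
The plan is to exhibit an explicit choice of coefficients $\pmb\beta^\star$ that upper-bounds the inner infimum, then compute the resulting expected loss term by term. Motivated by importance sampling, the natural choice is
\[ \beta_k^\star := \frac{\hat f(\omega_k)}{2\pi\, K\, \rho(\omega_k)}, \qquad k=1,\dots,K, \]
so that each $Z_k(\pmb x) := K\beta_k^\star e^{i\omega_k\cdot \pmb x}$ is an unbiased i.i.d.\ estimator of $f(\pmb x)$ when $\omega\sim\rho$, and $\beta^\star(\pmb x)=\frac{1}{K}\sum_{k=1}^K Z_k(\pmb x)$ is their empirical mean. First I would decompose $\mathrm{E}\bigl[||\beta(\pmb x)-\pmb u||^2\bigr] = \mathrm{E}\bigl[||\beta(\pmb x)-f(\pmb x)||^2\bigr] + \sigma^2$ using $\mathrm{E}[\epsilon]=0$ and independence of $\epsilon$ from $\pmb x$, which isolates the $\sigma^2$ term of the target bound.

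Next, for the data-fit term, the variance identity for i.i.d.\ averages gives, pointwise in $\pmb x$,
\[ \mathrm{E}_{\pmb\omega}\bigl[||\beta^\star(\pmb x)-f(\pmb x)||^2\bigr] = \frac{1}{K}\bigl(\mathrm{E}_\omega[||Z_1(\pmb x)||^2] - ||f(\pmb x)||^2\bigr). \]
Averaging over $\pmb x\sim \mathrm{Unif}(X)$ and using $|e^{i\omega\cdot\pmb x}|=1$ reduces the first term to $(2\pi)^{-2}\mathrm{E}_\omega[||\hat f(\omega)||^2/\rho(\omega)^2]$. A Cauchy--Schwarz step $\mathrm{E}_\omega[Y] \leq \sqrt{\mathrm{E}_\omega[Y^2]}$ applied to $Y=||\hat f(\omega)||^2/\rho(\omega)^2$ then produces exactly the $\frac{1}{(2\pi)^2 K}\sqrt{\mathrm{E}_\omega[||\hat f||^4/\rho^4]}$ contribution of the stated bound, together with the $-\frac{1}{K}\mathrm{E}[||f(\pmb x)||^2]$ correction.

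For the Sobolev penalty, apply $\mathcal{L}$ term by term to get $\mathcal{L}\beta(\pmb x) = \sum_k \beta_k\, p(\omega_k)\, e^{i\omega_k\cdot\pmb x}$ with polynomial symbol $p(\omega):=\sum_m c_m (i\omega_1)^{\alpha_{m,1}}(i\omega_2)^{\alpha_{m,2}}$ of degree $\leq d$. Integer-frequency orthogonality $\int_X e^{i(\omega-\omega')\cdot\pmb x}\,\mathrm{d}\pmb x = (2\pi)^2\delta_{\omega,\omega'}$ collapses $||\mathcal{L}\beta^\star||^2$ to $(2\pi)^2\sum_{k,k':\omega_k=\omega_{k'}} \beta_k^\star\!\cdot\!\overline{\beta_{k'}^\star}\, p(\omega_k)\overline{p(\omega_{k'})}$; taking expectations and isolating the diagonal $k=k'$ yields the leading $\frac{1}{K}\mathrm{E}_\omega[||\hat f(\omega)||^2|p(\omega)|^2/\rho(\omega)^2]$ contribution. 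A second Cauchy--Schwarz bounds this by $\frac{1}{K}\sqrt{\mathrm{E}_\omega[||\hat f||^4/\rho^4]}\,\sqrt{\mathrm{E}_\omega[|p(\omega)|^4]}$; since $|p|^4$ is a polynomial of degree $\leq 4d$, the assumption $\rho\in\mathcal{P}$ bounds $\mathrm{E}_\omega[|p(\omega)|^4]\leq C'$ uniformly. Setting $\overline{C}:=(2\pi)^2\sqrt{C'}$ and adding the two contributions establishes~(a).

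For part~(b), the only $\rho$-dependent quantity in the bound is $\mathrm{E}_\omega[||\hat f(\omega)||^4/\rho(\omega)^4]=\sum_\omega ||\hat f(\omega)||^4/\rho(\omega)^3$. H\"older's inequality with conjugate exponents $(4,4/3)$ gives
\[ \sum_\omega ||\hat f(\omega)|| = \sum_\omega \Bigl(\tfrac{||\hat f(\omega)||^4}{\rho(\omega)^3}\Bigr)^{1/4}\rho(\omega)^{3/4} \leq \Bigl(\sum_\omega \tfrac{||\hat f(\omega)||^4}{\rho(\omega)^3}\Bigr)^{1/4}\Bigl(\sum_\omega \rho(\omega)\Bigr)^{3/4}, \]
so $\sum_\omega ||\hat f||^4/\rho^3 \geq \bigl(\sum_\omega ||\hat f||\bigr)^4$ with equality iff $\rho(\omega)\propto||\hat f(\omega)||$; assumption~\eqref{eq: f in P} ensures this minimiser lies in $\mathcal{P}$. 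The main technical hurdle I foresee lies in the Sobolev step of (a): verifying that the off-diagonal contributions with $k\neq k'$ but $\omega_k=\omega_{k'}$ (which carry positive probability under discrete $\rho$) can be absorbed into the same $\overline{C}$, and confirming that the moment order $4d$ in the definition of $\mathcal{P}$ is indeed precisely what one needs after both Cauchy--Schwarz steps.
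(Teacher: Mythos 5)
Your part (a) follows the paper's own proof almost step for step: the same importance-sampling coefficients $\beta_k^\star=\hat f(\omega_k)/(2\pi K\rho(\omega_k))$, the same unbiasedness and bias--variance split isolating $\sigma^2$, the same i.i.d.\ variance identity giving $\tfrac{1}{(2\pi)^2K}\mathrm{E}[||\hat f(\omega)||^2/\rho(\omega)^2]-\tfrac1K\mathrm{E}[||f(\pmb x)||^2]$ followed by Jensen/Cauchy--Schwarz, and the same moment-bound treatment of the symbol (the paper bounds $\mathrm{E}[r(\omega)^2]$ with $r=|\ell_1|^2+|\ell_2|^2$ of degree $2d$, which is exactly your degree-$4d$ bookkeeping for $\mathrm{E}[|p(\omega)|^4]$, so the order $4d$ in $\mathcal{P}$ is indeed calibrated correctly). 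Part (b) is where you genuinely depart: the paper perturbs the unnormalised density, solves $H'(0)=0$ for a stationary point and then appeals to convexity to promote it to a global minimum, whereas your single application of H\"older with exponents $(4,4/3)$ yields $\sum_\omega||\hat f(\omega)||^4/\rho(\omega)^3\ge\bigl(\sum_\omega||\hat f(\omega)||\bigr)^4$ with the equality case identifying $\rho\propto||\hat f||$ in one stroke. Your route is shorter, avoids the variational formalism, and delivers global optimality without a separate convexity argument.

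The hurdle you flag at the end is real, and you should know that the paper does not clear it either: its penalty estimate rests on the inequality $\int_X|\sum_k a_ke^{i\omega_k\cdot\pmb x}|^2\,\mathrm{d}\pmb x\le(2\pi)^2\sum_k|a_k|^2$, attributed to the triangle inequality. For distinct integer frequencies this is Parseval's identity (an equality), but when $\omega_k=\omega_{k'}$ with $k\ne k'$ --- an event of positive probability for any discrete $\rho\in\mathcal{P}$ --- the cross terms are positive and the inequality fails. A direct computation with $a_k=\ell_1(\omega_k)\beta^\star_{k1}+\ell_2(\omega_k)\beta^\star_{k2}$ gives $\mathrm{E}\bigl[(2\pi)^2\sum_{k\ne k'}a_k\overline{a_{k'}}\,\delta_{\omega_k,\omega_{k'}}\bigr]=\tfrac{K-1}{K}\,||\mathcal{L}f||^2$, which is $O(1)$ in $K$ rather than $O(1/K)$; this is consistent with the lower bound $\mathrm{E}[||\mathcal{L}\beta^\star||^2]\ge||\mathcal{L}f||^2$ that follows from unbiasedness and Jensen, so the collision terms genuinely cannot be absorbed into $\overline{C}/K$ unless $\mathcal{L}f=0$. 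To make the statement airtight one must either add an additive $\lambda||\mathcal{L}f||^2$-type term to the right-hand side of (a) (which does not affect the optimisation in (b), since it is independent of $\rho$), or restrict to targets with $\mathcal{L}f=0$. So your instinct was correct, and your write-up is in this respect more honest than the proof it is being compared against.
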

\begin{proof} We divide the problem into part (a) and part (b) of the proposition.
\begin{enumerate}[(a)]
\item Let $\omega :=\omega_1, \beta := \beta_1$ for the sake of brevity, and define
\[
    \beta_k = \frac{\hat f(\omega_k)}{2\pi K \rho(\omega_k)}, \quad k = 1,2,\dots ,K.
\]
Using this definition, $\beta(\pmb x)$ is unbiased given $\pmb x$. To show this, we use the iid property of $\pmb\omega$:
\begin{align*}
\mathrm{E}\left[ \sum_{k=1}^K\beta_k e^{i\omega_k\cdot \pmb x}\;\Bigg|\; \pmb x\right] &=\mathrm{E}\left[K \beta_1 e^{i\omega_1\cdot \pmb x}\mid \pmb x\right] \\
&=\sum_{\omega\in\Z^2}K\frac{\hat f(\omega)}{2\pi K\rho(\omega)}e^{i\omega\cdot \pmb x}\rho(\omega)\\
&=\frac{1}{2\pi}\sum_{\omega'\in\Z^2}\hat f(\omega)e^{i\omega\cdot \pmb x} = f(\pmb x).
\end{align*}
Coincidentally, the expected square error can be simplified using independence of the residuals $\epsilon$ and the data $\pmb x$:
\begin{align*}
    \mathrm{E}\left[||\beta(\pmb x) - \pmb u ||^2\mid \pmb x\right] &= \mathrm{E}\left[||\beta(\pmb x) - (f(\pmb x) + \epsilon)||^2\mid \pmb x\right] \\&=
    \mathrm{E}\left[||(\beta(\pmb x) - f(\pmb x)) - \epsilon||^2\mid \pmb x\right] \\ &=\mathrm{E}\left[||\beta(\pmb x) - f(\pmb x)||^2\mid \pmb x\right] + \sigma^2.
\end{align*} 
Which shows that the expected square error is exactly the variance of $\beta$, plus the mean square of the noise. The variance can be simplified further since the frequencies $\omega_k$ are assumed independent:
\begin{align*}
    \mathrm{E}\left[\left|\left|\sum_{k=1}^K\beta_ke^{i\omega_k \cdot \pmb x}-f(\pmb x)\right|\right|^2\right]
    &= \frac{1}{K}\mathrm{E}\left[\left|\left|K\beta e^{i\omega \cdot \pmb x}-f(\pmb x)\right|\right|^2\right]\\
    &=K\mathrm{E}\left[||\beta||^2 \right]-\frac{1}{K}\mathrm{E}\left[||f(\pmb x)||^2\right]\\
    &=\frac{1}{(2\pi)^2K}\mathrm{E}\left[\frac{||\hat f(\omega)||^2}{\rho(\omega)^2}\right] - \frac{1}{K}\mathrm{E}\left[||f(\pmb x)||^2\right]\\
    &\leq \frac{1}{(2\pi)^2K}\sqrt{\mathrm{E}\left[\frac{||\hat f(\omega)||^4}{\rho(\omega)^4}\right]} - \frac{1}{K}\mathrm{E}\left[||f(\pmb x)||^2\right].
\end{align*}
The last step is due to the Jensen inequality. Now, turn to the penalty term containing the linear operator $\mathcal{L}$. Applying $\mathcal{L}$ to $\beta(\pmb x)$ is equivalent to multiplying each term $\beta_{kj}e^{i\omega_k x}, j=1,2$ of the Fourier series with $\ell_j(\omega_k) = \sum_{m=1}^M c_m (i\omega_{k,1})^{\alpha_{m,1}}(i\omega_{k,2})^{\alpha_{m,2}}$, a multivariate polynomial in $\omega_k$ of degree $d$. Define $r(\omega) = |\ell_1(\omega)|^2 + |\ell_2(\omega)|^2$. Note that $r$ has degree $2d$. It follows that
\begin{multline*}
    ||\mathcal{L}\beta||^2 = \int_{X}||\mathcal{L}\beta(x)||^2 \mathrm{d}x = \int_{X}\left|\sum_{k=1}^K (\ell_1(\omega_k)\beta_{k1} + \ell_2(\omega_k)\beta_{k2}) e^{i\omega_k\pmb x}\right|^2 \mathrm{d}\pmb{x} \leq\\ \leq \sum_{k=1}^K|\ell_1(\omega_k)\beta_{k1} + \ell_2(\omega_k)\beta_{k2}|^2\int_X 1\mathrm{d}x = \sum_{k=1}^K(2\pi)^2|\ell_1(\omega_k)\beta_{k1} + \ell_2(\omega_k)\beta_{k2}|^2\\\leq \sum_{k=1}^K r(\omega_k)||2\pi \beta_k||^2,
\end{multline*}
where we used the triangle inequality combined with the size $(2\pi)^2$ of the region $X=[0,2\pi]\times[0,2\pi]$. Taking the expectation, using independence of its elements as well as the Cauchy-Schwarz inequality we get:
\begin{multline*}
    \mathrm{E}\left[||\mathcal{L}\beta||^2\right] \leq \mathrm{E}\left[\sum_{k=1}^Kr(\omega_k)||2\pi\beta_k||^2\right]=\sum_{k=1}^K \mathrm{E}\left[r(\omega_k)||2\pi\beta_k||^2\right]=K \mathrm{E}\left[r(\omega)||2\pi\beta||^2\right]\\
    =K\mathrm{E}\left[r(\omega)\left|\left|2\pi\frac{\hat f(\omega)}{2\pi K \rho(\omega)}\right|\right|^2\right] \leq \frac{1}{K}\sqrt{\mathrm{E}\left[r(\omega)^2\right]\mathrm{E}\left[\frac{||\hat f(\omega)||^4}{\rho(\omega)^4}\right]}
\end{multline*}
The multivariate polynomial $r(\omega)^2 := \sum_{m=1}^{M'}r_m \omega_1^{\alpha_{m,1}}\omega_2^{\alpha_{m,2}}$ is of order $4d$, i.e. $\alpha_{m,1}+\alpha_{m,2}\leq 4d$. For each term $m$, define $\gamma_m := \frac{\alpha_{m,1}}{\alpha_{m,1}+\alpha_{m,2}}\in (0,1)$. By assumption, $\rho$ lies in $\mathcal{P}$, and therefore, the expectation $\mathrm{E}\left[r(\omega)^2\right]$ can be uniformly bounded as follows:
\begin{align*}
    \mathrm{E}\left[r(\omega)^2\right] &= \sum_{m=1}^{M'}r_m \mathrm{E}\left[\omega_1^{\alpha_{m,1}}\omega_2^{\alpha_{m,2}}\right] \leq \sum_{m=1}^{M'}|r_m| \mathrm{E}\left[|\omega_1|^{(\alpha_{m,1}+\alpha_{m,2})\gamma_m}|\omega_2|^{(\alpha_{m,1}+\alpha_{m,2})(1-\gamma_m)}\right]\\
    &\leq  \sum_{m=1}^{M'}|r_m| \underbrace{\left({\mathrm{E}\left[|\omega_1|^{\alpha_{m,1}+\alpha_{m,2}}\right]}^{\gamma_m}{\mathrm{E}\left[|\omega_2|^{\alpha_{m,1}+\alpha_{m,2}}\right]}^{1-\gamma_m}\right)}_{\leq C^{\gamma_m}C^{1-\gamma_m} = C} \leq  C\sum_{m=1}^{M'}|r_m| := \frac{\overline{C}}{(2\pi)^2}.
\end{align*}
In the above calculations, we used Hölder's inequality: $\mathrm{E}[XY]\leq (\mathrm{E}[|X|^p])^{\frac{1}{p}}(\mathrm{E}[Y^q]^{\frac{1}{q}})$ for any $\frac{1}{p}+\frac{1}{q}=1$, termwise with $p = \frac{1}{\gamma_m}$. Collecting all of our results, the expected loss is bounded by
\begin{align*}
    \mathrm{E}\left[\min_{\pmb\beta}\mathrm{E}\left[||\beta(\pmb x)-\pmb u||^2+||\mathcal{L}\beta||^2 \mid \pmb \omega\right]\right] &\leq \frac{1+\lambda \overline{C}}{(2\pi)^2K}\sqrt{\mathrm{E}\left[\frac{||\hat f(\omega)||^4}{\rho(\omega)^4}\right]} -\frac{1}{K}\mathrm{E}[||f(\pmb x)||^2]+\sigma^2.
\end{align*}
\item To derive an optimal choice for $\rho$, we seek to minimise the expression inside the root. We can redefine this problem in terms of a non-normalised function $p$ such that $\rho(\omega) = p(\omega)/\sum_{\Z^2}p(\omega')$:
\[
    \text{minimise}\quad  \left(\sum_{\Z^2}\frac{||\hat f(\omega)||^4}{p(\omega)^3}\right)\cdot\left(\sum_{\Z^2}p(\omega)\right)^3.
\]
First, define a real-valued function $H(\epsilon)$ where $\epsilon$ is a real number close to zero:
\[
    H(\epsilon) = \left(\sum_{\Z^2}\frac{||\hat f(\omega)||^4}{(p(\omega)+\epsilon \delta(\omega))^3}\right)\cdot\left(\sum_{\Z^2}p(\omega)+\epsilon\delta(\omega)\right)^3,
\]
where $\delta$ is a small arbitrary variation of $p$. Next, seek a solution $p$ to $H'(0) = 0$. 
\begin{multline*}
    H'(0) = \left(\sum_{\Z^2}-3\frac{||\hat f(\omega)||^4}{p(\omega)^4}\delta(\omega)\right)\cdot\overbrace{\left(\sum_{\Z^2}p(\omega)\right)^3}^{c_1}+ \\ \underbrace{\left(\sum_{\Z^2}\frac{||\hat f(\omega)||^4}{p(\omega)^3}\right)\cdot\left(\sum_{\Z^2}p(\omega)\right)^2}_{c_2}\sum_{\Z^2}\delta(\omega) = 0.
\end{multline*}
Defining the constants $c_1,c_2$ as above, the equation can be rewritten as 
\[
    \sum_{\Z^2}\left(\frac{||\hat f(\omega)||^4}{p(\omega)^4} - \frac{c_2}{3c_1}\right)\delta(\omega) = 0
\]
Since $\delta(\omega)$ is arbitrary here, the expression inside the sum must be zero. Thus,
\[
    \left(\frac{||\hat f(\omega)||^4}{p(\omega)^4} - \frac{c_2}{3c_1}\right) = 0 \iff p(\omega) = \sqrt[4]{\frac{3c_1}{c_2}}||\hat f(\omega)||
\]
Hence, the optimal $\rho$ is 
\[
    \rho(\omega) = \frac{p(\omega)}{\sum_{\Z^2}p(\omega')} =\frac{\sqrt[4]{\frac{3c_1}{c_2}}||\hat f(\omega)||}{\sum_{\Z^2}\sqrt[4]{\frac{3c_1}{c_2}}||\hat f(\omega')||} =  \frac{||\hat f (\omega)||}{\sum_{\Z^2}||\hat f(\omega')||}.
\]
Lastly, some straight forward calculations show the optimisation with respect to $\rho$ is a convex optimisation problem (that is, $\mathcal{P}$ is a convex set and $\mathrm{E}\left[\frac{||\hat f(\omega)||^4}{\rho(\omega)^4}\right]$ is a convex function with respect to $\rho$). But then, the above derived local minimum must also be a global minimum.
\end{enumerate}

\end{proof}

\clearpage
\bibliography{bibliography}
\bibliographystyle{ieeetr}
\end{document}